\newcommand{\rr}{\mathbb{R}}
\newcommand{\cc}{\mathbb{C}}
\newcommand{\nn}{\mathbb{N}}
\newcommand{\dd}{\mathbb{D}}
\newcommand{\tlr}{\mathcal{T}}
\newcommand{\overbar}[1]{\mkern 1.5mu\overline{\mkern-1.5mu#1\mkern-1.5mu}\mkern 1.5mu}
\newcommand{\setof}[1]{\left\{#1\right\}}
\newcommand{\abs}[1]{\left| #1 \right|}
\DeclarePairedDelimiter{\norm}{\lVert}{\rVert}
\newcommand{\paren}[1]{\left( #1 \right)}
\renewcommand{\L}{\ell^1}
\newcommand{\ck}{Cauchy-Kovalevskaya }
\newcommand{\pwrs}{\sum_{j = 0}^{\infty}} 
\newcommand{\tail}{_{\text{tail}}} 
\newcommand{\Frech}{Fr\'{e}chet }
\newtheorem{theorem}{Theorem}
\newtheorem{lemma}[theorem]{Lemma}
\newtheorem{proposition}[theorem]{Proposition}
\theoremstyle{definition}
\newtheorem{definition}{Definition}
\newtheorem{example}{Example}
\theoremstyle{remark}
\begin{document}
	

\title[]{A constructive proof of the  Cauchy-Kovalevskaya theorem for ordinary differential equations}

%


\author[Kepley]{Shane Kepley*}
\address{Rutgers University, Department of Mathematics}
\email{sk2011@math.rutgers.edu}
\thanks{*Corresponding author}

\author[Zhang]{Tianhao Zhang}
\address{Zhejiang University, School of Mathematical Sciences}
	\email{3160105282@zju.edu.cn}
%

\date{\today}

%

\begin{abstract}
	We give a constructive proof of the classical Cauchy-Kovalevskaya theorem for ordinary differential equations which provides a sufficient condition for an initial value problem to have a unique, analytic solution. Our proof is inspired by a modern numerical technique for rigorously solving nonlinear problems known as the radii polynomial approach. The main idea is to recast the existence and uniqueness of analytic solutions as a fixed point problem on an appropriately chosen Banach space, and then prove a fixed point exists via a constructive version of the Banach fixed point theorem. A key aspect of this method is the use of an approximate solution which plays a crucial role in the theoretical proof. Our proof is constructive in the sense that we provide an explicit recipe for constructing the fixed point problem, an approximate solution, and the bounds necessary to prove the existence of the fixed point.   
\end{abstract}

\maketitle

\section{Introduction}
\label{sec:introduction}
In this paper we present a novel proof of the \ck theorem in the ordinary differential equation (ODE) setting. The general theorem, first proved by Sonya Kovalevskaya in 1874, gives sufficient conditions for a Cauchy problem to have a unique analytic solution. Unfortunately, spaces of analytic functions are typically not the right regularity for studying solutions of partial differential equations (PDE) so the \ck theorem is rarely practically applicable in this setting. On the other hand, the \ck theorem is often applicable to initial value problems (IVP) arising from ODEs which is the focus of the present work. We begin by stating the theorem in this setting. A statement of the general theorem and its classical proof can be found in most introductory PDE texts e.g.~\cite{Ebert2018a}. 
\begin{theorem}[\ck]
	\label{thm:ck_for_ODE}
	Suppose $V \subset \rr^n$ is an open subset and $f \colon V \to \rr^n$  is an analytic vector field. Then, the initial value problem
	\begin{equation}
	\dot x = f(x) \qquad x(0) = x_0 \in V
	\label{eq:IVP}
	\end{equation}
	has a unique solution which is analytic on some open interval, $J(x_0)$, containing zero. 
\end{theorem}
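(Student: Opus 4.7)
The plan is to recast \eqref{eq:IVP} as a zero-finding problem for power-series coefficients in a Banach space of analytic germs, and then verify the hypotheses of a quantitative Banach fixed point theorem (the radii polynomial approach) for a Newton-like operator built around an explicit approximate solution. Writing a candidate solution as $x(t) = \pwrs a_j t^j$ with $a_0 = x_0$ and matching coefficients in \eqref{eq:IVP} yields the recurrence $(k+1) a_{k+1} = c_k(a)$, where $c_k(a)$ denotes the $k$th Taylor coefficient of $f \circ x$. I would work on the weighted sequence space $\ell^1_\nu := \{\, a \in (\rr^n)^{\nn} : \|a\|_\nu := \pwrs |a_j|\, \nu^j < \infty\,\}$, whose elements are exactly the coefficient sequences of functions analytic on the disk of radius $\nu$. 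The free weight $\nu > 0$ will ultimately control the length of the interval $J(x_0)$ on which analyticity is proved.

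After encoding the IVP as $F(a) = 0$ for an appropriate map $F : \ell^1_\nu \to \ell^1_\nu$, I would fix an approximate solution $\bar a$ obtained by iterating the recurrence up to some order $N$ and padding with zeros, and construct a linear map $A$ that approximately inverts the \Frech derivative $DF(\bar a)$: a genuine inverse of the finite leading block, together with the diagonal extension $a \mapsto (a_k/k)_k$ on the tail, taking advantage of the integration factor $1/k$. Consider the Newton-like operator $T(a) := a - A F(a)$, whose fixed points are zeros of $F$. The standard bounds to establish are
\begin{align*}
\|T(\bar a) - \bar a\|_\nu &\le Y_0, \\
\|I - A\, DF(\bar a)\|_{\mathrm{op}} &\le Z_0, \\
\sup_{\|b\|_\nu \le r} \|A\bigl[DF(\bar a + b) - DF(\bar a)\bigr]\|_{\mathrm{op}} &\le Z_1(r),
\end{align*}
after which the radii polynomial $p(r) := Y_0 + (Z_0 - 1)\, r + Z_1(r)\, r$ encodes both the self-map and contraction requirements for $T$ on the closed ball $B_r(\bar a) \subset \ell^1_\nu$. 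Finding $r_* > 0$ with $p(r_*) \le 0$ then produces, by the contraction mapping principle, a unique fixed point of $T$ in $B_{r_*}(\bar a)$, i.e.\ a unique analytic solution of \eqref{eq:IVP} on $(-\nu, \nu)$.

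The engine that makes every bound computable is the fact that $\ell^1_\nu$ is a Banach algebra under the Cauchy product, so if $f$ is polynomial the sequence $c(a)$ and its \Frech derivative are built from finitely many products and admit clean estimates; for general analytic $f$ one expands $f$ in its own Taylor series about $x_0$ and controls the tail using openness of $V$, which supplies uniform bounds on $f$ and its derivatives on a closed polydisk inside $V$. The main obstacle, I expect, is to drive $Y_0$ and $Z_0$ simultaneously small while keeping $Z_1(r)$ tame. The two levers for this are the truncation order $N$ (shrinking $Y_0$ by making the approximate solution satisfy the recurrence to higher order) and the weight $\nu$ (shrinking $Z_0$ and $Z_1$ via the $1/k$ integration factor acting on the tail of $DF$), and the constructive core of the proof is a recipe that, given $f$ and $x_0$, outputs explicit choices of $\nu$, $N$, $\bar a$, $A$, and numerical values $Y_0, Z_0, Z_1$ for which $p$ has a positive root, thereby certifying analyticity of the solution on the explicit interval $J(x_0) \supset (-\nu, \nu)$.
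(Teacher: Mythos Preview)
Your outline is sound and would work, but it follows a genuinely different route from the paper. You set up a Newton-like operator $T(a)=a-AF(a)$ on the full sequence space $\ell^1_\nu$, with an approximate inverse $A$ built from a finite block plus the diagonal tail $a_k\mapsto a_k/k$, and you treat $N$ and $\nu$ as the two levers. The paper instead uses a \emph{Picard-like} operator with no approximate inverse at all: it fixes $\nu=1$, introduces a time-rescaling parameter $\tau$ (equivalent to your $\nu$ by their Proposition~\ref{prop:tau_nu_equivalence}), splits off the first $N$ coefficients $\hat a(\tau)$ via the recursion, and works on the tail subspace $X=\{u\in\ell^1:u_0=\cdots=u_N=0\}$ with
\[
T_\tau(u)=\tau\,\pi_N\circ I\circ\phi_f(\hat a(\tau)+u),
\]
i.e.\ integrate $f$ evaluated on the candidate and project to the tail. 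The integration gain $1/j\le 1/(N+1)$ then gives directly $Y_\tau=\tau C/(N+1)$ and a constant $Z_\tau=\tau C^*/(N+1)$; there is no $Z_0$ term because there is no $A$ to compare with $DF^{-1}$. The single lever that closes the argument is $\tau$: for $\tau$ small enough both bounds are below $1$ and $p(r_0)<0$ for $r_0=\tfrac{N}{N+1}r^*$. Your Newton-like scheme is the more general template (it survives when the raw Picard map is not contractive), but here it costs you the construction and injectivity of $A$ and an extra $Z_0$ estimate; the paper's Picard route exploits the specific structure of an IVP to avoid all of that and obtain shorter, fully explicit bounds.
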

There are several proofs of this theorem in the literature. The classical proof provides a prototypical example of the {\em method of majorants}. In order to illustrate the constructive aspect of our approach, we sketch a version of the classical proof for the case $n = 1$. 

The main idea in the classical proof is to use the Taylor coefficients of $f$ to dominate the Taylor coefficients of $x$. Roughly speaking, $f$ is analytic if its Taylor coefficients decay rapidly enough. The classical proof follows from showing that this condition forces the Taylor coefficients of any solution to decay rapidly as well. Note that the existence and uniqueness of a solution on some open interval, $J(x_0)$, containing zero follows from the Picard-Lindel\"of theorem. In fact, by the usual bootstrap argument this theorem shows that this solution is as smooth as $f$. Hence, we may take for granted the existence of $x \in C^\infty(J(x_0))$ satisfying Equation \eqref{eq:IVP}. 

The \ck theorem asserts that in fact, $x \in C^\omega(J(x_0))$. Equivalently, there exists $\tau > 0$,  such that the series
\begin{equation}
x(t) = 	\sum_{j = 0}^\infty \frac{x^{(j)}(0)}{j!} t^j \qquad \abs{t} < \tau 
\end{equation}
converges. The crux of the classical argument arises from applying the Fa\`{a} di Bruno formula for the iterated chain rule with the assumption that $x$ satisfies Equation \eqref{eq:IVP}, to obtain the formula
\begin{equation}
\label{eq:majorizing_sequence}
x^{(j)}(0) = p_j \paren{f(0), f'(0), \dotsc, f^{(j-1)}(0)} \qquad j \in \nn,
\end{equation}
where each $p_j$ is a polynomial in $j$ variables with non-negative coefficients. Then, one defines the non-negative sequence, $\setof{u_j} = \setof{\abs{f^{(j)}(0)} : j \in \nn}$, so that we have the bound 
\begin{equation}
\label{eq:majorizing_bound}
\abs{x^{(j)}(0)} \leq p_j(u_0,\dotsc,u_{j-1}) \qquad \text{for all} \ j \in \nn.
\end{equation}
This bound implies that the function 
\begin{equation}
\label{eq:majorizing_function}
\tilde x(t) := \sum_{j = 0}^\infty \frac{p_j \paren{u_0, \dotsc, u_{j-1}} }{j!} t^j 
\end{equation}
is a majorant for $x$. The classical proof is concluded by showing that $\tilde x$ is analytic which ultimately follows as a consequence of the fact that $f$ is analytic. 

The classical proof is quite beautiful, however, we note that it is not constructive. In contrast with this approach, our proof of the \ck theorem is based on analyzing the coefficients of the solution and proving directly that they decay sufficiently fast. Our proof is inspired by the so called ``radii polynomial approach'', which provides a constructive framework for proving theorems in nonlinear analysis with assistance of a digital computer. While our proof does not have a numerical aspect, it is carried out in the same style so we briefly review the method.

\subsection{The radii polynomial approach}
The radii polynomial approach is a modern methodology combining functional analytic tools with rigorous numerical computations to study nonlinear problems.  The method first appeared in \cite{MR2338393} as a modification of the technique presented in \cite{MR1639986} for rigorously proving the existence of solutions of zero-finding problems using Newton's method. Since then, the radii polynomial approach has played an important role in a number of results in dynamical systems such as existence of spontaneous periodic solutions in the Navier-Stokes equations \cite{Berg2019}, chaos in the circular restricted four body problem \cite{Kepley2019}, coexistence of hexagonal patterns and rolls in the Swift-Hohenberg equations \cite{BDLM}, and the proof of Wright's conjecture \cite{VANDENBERG20187412}, to name just a few. This is a small subset of the growing collection of results which utilize the radii polynomial approach as the basis for rigorous numerical algorithms for computation and continuation of equilibria, periodic orbits, connecting orbits, solutions of initial/boundary value problems, and invariant manifolds (see e.g.\ \cite{MR2718657}, \cite{Berg2019a},  \cite{chebManifolds}, \cite{Lessard2014},  \cite{MR3068557},  \cite{manifoldPaper1},  \cite{parmChristian}, \cite{VanDenBerg2016}, \cite{Gonzalez2017}). A more detailed exposition on rigorous numerical techniques and various applications of radii polynomial approach can be found in \cite{jpjbReview}, \cite{VandenBerg2018}.

The main idea is to first recast problems as a zero-finding problem on a Banach space. Then, a Newton-like operator is introduced which has fixed points in one-to-one correspondence with solutions of the zero-finding problem. By combining careful ``pencil and paper'' estimates with rigorous computations, one tries to prove that this Newton-like operator has a fixed point by an application of the Banach fixed point theorem. If successful, the existence of a zero for the original problem is concluded.
 
In \cite{JayAMSChapter} the radii polynomial approach was generalized to a rigorous numerical IVP solver for polynomial vector fields in  which the fixed-point problem does not arise from a Newton-like operator. This approach was based on modifying the methodology in \cite{MR3281845} in which one looks for a fixed point of a ``Picard-like'' operator. In this work we follow a similar approach.  The main idea is to associate any instance of Equation \eqref{eq:IVP}, with a mapping, $T : X \to X$, where $X$ is an appropriate space of rapidly decaying real sequences. We provide an explicit construction for $X$ and $T$ depending only on $f$ and $x_0$, and we prove that if $T$ has a fixed point, then the solution of Equation \eqref{eq:IVP} is analytic. The \ck theorem follows after proving that if $f$ is analytic, then our construction always produces a map with a fixed point. 

We begin by describing the main theorem utilized in our approach which is a constructive version of the Banach fixed point theorem. 
\begin{theorem}
	\label{thm:local_radii_polynomial}
	Suppose that $X$ is a Banach space with norm $\norm{\cdot}_{X}$, $U\subset X$ is an open subset, and $T:U \to X$ is a \Frech differentiable map. Fix $\bar{x} \in U$ and let $r^*>0$ be given such that  $\overline{B_{r^*}(\bar{x})} \subset U$. Let $Y$ be a positive constant satisfying 
	\begin{equation}
	\label{eq:rpoly_Y}
	\norm{T(\bar{x})-\bar{x}}_{X} \le Y,
	\end{equation}
	and $Z:(0,r^*) \to [0,\infty)$ is a non-negative function satisfying 
	\begin{equation}
	\label{eq:rpoly_Z}
	\sup_{x\in\overline{B_r(\bar{x})}} \norm{DT(x)}_{X} \le Z(r) \qquad \text{for all} \quad r\in (0,r^*),
	\end{equation}
	where $DT(x)$ denotes the \Frech derivative of $T$ at $x \in U$ and $\norm{DT(x)}_{X}$ denotes the operator norm induced by  $\norm{\cdot }_{X}$. 
	We define the radii polynomial, $p: (0, r^*) \to \rr$, by the formula
	\begin{equation}
	p(r):=Z(r)r-r+Y.
	\label{eq:rpoly}
	\end{equation}
	If there exists $r_0\in (0,r^*)$ such that $p(r_0)<0$, then there exists a unique $x \in \overline{B_{r_0}(\bar{x})}$ so that $T(x)=x$.
\end{theorem}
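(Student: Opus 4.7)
The plan is to verify the hypotheses of the classical Banach fixed point theorem on the complete metric space $\overline{B_{r_0}(\bar x)}$, with the contraction constant and the self-mapping property both extracted from the single scalar inequality $p(r_0) < 0$. Since $\overline{B_{r_0}(\bar x)} \subset \overline{B_{r^*}(\bar x)} \subset U$ is a closed subset of the Banach space $X$, it inherits a complete metric, so the only things to check are that $T$ restricts to a self-map of this ball and that the restriction is a strict contraction.

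First I would use the hypothesis on $DT$ together with the mean value inequality for \Frech differentiable maps on convex domains to extract a Lipschitz constant. Because $\overline{B_{r_0}(\bar x)}$ is convex, for any $x, y$ in this ball the segment $\{(1-t)x + ty : t \in [0,1]\}$ lies inside it, and hence inside $U$, so
\begin{equation*}
\norm{T(x) - T(y)}_{X} \le \sup_{t \in [0,1]} \norm{DT((1-t)x + ty)}_{X} \norm{x - y}_{X} \le Z(r_0) \norm{x - y}_{X}.
\end{equation*}

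Next I would verify that $T(\overline{B_{r_0}(\bar x)}) \subset \overline{B_{r_0}(\bar x)}$. For any $x \in \overline{B_{r_0}(\bar x)}$, the triangle inequality combined with the Lipschitz estimate just obtained and the bound $\norm{T(\bar x) - \bar x}_{X} \le Y$ gives
\begin{equation*}
\norm{T(x) - \bar x}_{X} \le \norm{T(x) - T(\bar x)}_{X} + \norm{T(\bar x) - \bar x}_{X} \le Z(r_0) r_0 + Y = p(r_0) + r_0 < r_0,
\end{equation*}
where the final strict inequality is precisely the hypothesis $p(r_0) < 0$. Hence $T$ maps the closed ball into itself.

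Finally, the same hypothesis forces $Z(r_0) r_0 < r_0 - Y \le r_0$, so $Z(r_0) < 1$ and the Lipschitz estimate upgrades to a strict contraction on $\overline{B_{r_0}(\bar x)}$. The Banach fixed point theorem then supplies a unique $x \in \overline{B_{r_0}(\bar x)}$ with $T(x) = x$, which is the desired conclusion. There is no genuine obstacle to overcome; the main technical ingredient is the mean value inequality, which promotes the pointwise operator-norm bound encoded by $Z$ into a global Lipschitz bound, and the remainder is the algebraic observation that $p(r_0) < 0$ simultaneously encodes both the self-map condition and the contraction condition.
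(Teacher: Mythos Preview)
Your argument is correct and is precisely the standard route: extract a Lipschitz constant $Z(r_0)$ on the convex closed ball via the mean value inequality, then use the single inequality $p(r_0)<0$ to obtain both the self-map property (via the triangle inequality splitting $T(x)-\bar x$ through $T(\bar x)$) and the contraction constant $Z(r_0)<1$, and finish with the Banach fixed point theorem. The paper does not actually supply a proof of this theorem; it simply cites \cite{JayAMSChapter} for it, and your write-up is exactly the argument one finds there.
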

\noindent
The version presented in Theorem \ref{thm:local_radii_polynomial} first appeared in \cite{JayAMSChapter} where its proof can also be found. Our proof of the \ck theorem will follow from applying Theorem \ref{thm:local_radii_polynomial} in two steps.  First, we construct a fixed point problem which amounts to defining $X,U, r^*$, and $T$ appropriately, and proving that if our construction has a solution then Equation \eqref{eq:IVP} has an analytic solution. Then, we construct $\overbar{x}$, and bounds, $Y,Z$, and prove that we can always find a positive value which makes the corresponding radii polynomial negative. 

The remainder of the paper is organized as follows. In Section \ref{sec:ck_scalar}, we introduce notation and describe the construction of the fixed point problem in case $f$ is a scalar i.e.\ $n=1$. Then, we prove that our construction has a fixed point if $f$ is analytic by applying Theorem \ref{thm:local_radii_polynomial}. In Section \ref{sec:ck_vectorfield}, we generalize the construction to the vector field case. As in the scalar case, we prove that our fixed point problem always has a solution when $f$ is analytic. Finally, we prove that any fixed point of our construction implies the existence of an analytic solution for Equation \eqref{eq:IVP} which proves the \ck theorem for ODEs. 



\section{The fixed point problem for scalar equations}
\label{sec:ck_scalar}
In this section, we consider Equation \eqref{eq:IVP} for the case that $f$ is an arbitrary analytic scalar function. Specifically, we assume that $n = 1$ and for some $b > 0$, $f : (x_0 - b,x_0 + b) \to \mathbb{R}$ is analytic. Therefore, $f(x)$ may be written as a convergent Taylor series of the form
\[
f(x) = \sum_{k=0}^{\infty} c_k (x-x_0)^k \qquad \text{where} \quad c_k = \frac{f^{(k)}(x_0)}{k!} \quad \text{for} \quad  k \in \nn.
\]
We begin by defining some notation and reviewing necessary prerequisites from complex and functional analysis.  
\subsection{Preliminaries}
\label{sec:prelim}
We will work with the collection of real valued sequences denoted by
\begin{equation}
S :=  \setof{\setof{u_j}_{j = 0}^\infty : u_j \in \rr, \ 0 \leq j < \infty}.
\end{equation}
Let $S^\omega_\nu \subset S$ denote the collection of sequences which define analytic functions on $C^\omega(\dd_\nu)$ where $\dd_\nu = \setof{z \in \cc : \abs{z} < \nu}$ is the complex disc of radius $\nu > 0$. Though we are interested specifically in {\em real} analytic functions, we are only concerned with the property that a function converges to a power series. Thus, we do not make a distinction between a real analytic function converging say on an interval of radius $r> 0$, and its continuation to a complex analytic function converging on a complex disc of radius $r$. 

In order to apply Theorem \ref{thm:local_radii_polynomial}, we require a Banach space in which to work. With this goal in mind, we start by equipping $S$ with an appropriate norm. 
\begin{definition}
Fix a weight, $\nu > 0$ and define the space of weighted, absolutely summable sequences
\label{def:l1_nu}
\[
\ell^1_\nu := \left\{u \in S :  \sum_{j = 0}^{\infty} \nu^j \abs{u_j} < \infty\right\}.
\]
This is a normed vector space and we denote the norm of $u \in \ell^1_\nu$ by
\[
\norm{u}_{1, \nu} :=  \sum_{j = 0}^{\infty} \nu^j \abs{u_j} .
\]
\end{definition}
We note the obvious inclusions $\ell^1_\nu \subset S^\omega_\nu \subset S$ and each is strict. The following theorem provides a connection between $S^\omega_\nu$ and $\ell^1_\nu$.
\begin{proposition}
	\label{prop:ell1_vs_analytic}
	Fix $\nu > 1$ and suppose $g \in C^\omega(\dd_{\nu})$ with Taylor coefficients given by $u \in S^\omega_\nu$. Then $u \in \ell^1_{\nu'}$ for any $\nu' < \nu$. In fact, since $g^{(k)} \in C^\omega(\dd_{\nu})$ for any $k \in \nn$, it follows that the Taylor coefficients of $g^{(k)} \in \ell^1_{\nu'}$ as well. 
\end{proposition}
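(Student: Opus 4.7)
The plan is to exploit the standard fact that a power series which converges on a disc of radius $\nu$ has coefficients whose growth is controlled by $\nu^{-j}$ up to subexponential factors, together with the geometric-series trick.

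First, I fix $\nu' < \nu$ and choose an intermediate radius $\nu''$ with $\nu' < \nu'' < \nu$. Since $g \in C^\omega(\dd_\nu)$ with Taylor coefficients $u$, the power series $\sum_{j=0}^\infty u_j z^j$ converges at $z = \nu''$. In particular the terms $|u_j|(\nu'')^j$ are bounded, say by some constant $M > 0$. Therefore
\begin{equation*}
\norm{u}_{1,\nu'} = \sum_{j=0}^\infty (\nu')^j |u_j| \le M \sum_{j=0}^\infty \paren{\frac{\nu'}{\nu''}}^j < \infty,
\end{equation*}
since the ratio $\nu'/\nu''$ lies in $(0,1)$. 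This establishes the first claim, namely $u \in \ell^1_{\nu'}$.

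For the statement about derivatives, the key observation is that a function analytic on $\dd_\nu$ has all of its derivatives analytic on the same disc. Specifically, by standard results in complex analysis, differentiation of a power series preserves the radius of convergence, so $g^{(k)} \in C^\omega(\dd_\nu)$ and its Taylor coefficients form an element of $S^\omega_\nu$. Applying the result just proved to $g^{(k)}$ in place of $g$ gives that these coefficients lie in $\ell^1_{\nu'}$ for any $\nu' < \nu$, as desired.

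The only mild subtlety is the choice of the intermediate radius $\nu''$: one needs the strict inequality $\nu' < \nu$ precisely to insert a $\nu''$ strictly between them and thereby obtain a convergent geometric majorant. No difficulty arises in the argument beyond this bookkeeping, so the proof is essentially a direct consequence of Abel's theorem on the convergence of power series inside their disc of convergence together with the stability of the radius of convergence under termwise differentiation.
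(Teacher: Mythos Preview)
Your argument is correct and is the standard one: bound the coefficients via convergence at an intermediate radius $\nu''\in(\nu',\nu)$ and then sum a geometric series. The paper does not actually supply a proof of this proposition; it simply cites \cite{Scheidemann2005}, so there is nothing to compare against beyond noting that your approach is the expected textbook route.
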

The proof can be found in \cite{Scheidemann2005}. Roughly speaking, Proposition \ref{prop:ell1_vs_analytic} says we can pass from analytic functions to $\ell^1_\nu$ sequences provided we ``give up some domain''. This trick is commonly used in rigorous numerical algorithms to obtain bounds on rounding and truncation errors for Taylor series. In our setting, the theorem gives us license to work with sequences in $\ell^1_\nu$ as opposed to $S^\omega_\nu$. The next proposition shows that it suffices to consider the case $\nu = 1$. 
\begin{proposition}
	\label{prop:tau_nu_equivalence}
	Suppose $V \subset \rr$ is an open subset and $f: V \to \rr$. For any $\tau, \nu> 0$, the initial value problem
	\begin{equation}
	\label{eq:nu_IVP}
	\frac{dx}{dt}  = f(x) \qquad x(0) = x_0
	\end{equation}
	has a solution with Taylor coefficients in $\ell^1_\tau$ if and only if the initial value problem
	\begin{equation}
	\label{eq:tau_IVP}
	\frac{d y}{d s}= \frac{\tau}{\nu} f(y) \qquad y(0) = x_0
	\end{equation}
	has a solution with Taylor coefficients in $\ell^1_\nu$. 
\end{proposition}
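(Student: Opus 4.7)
The plan is to implement the proposition as a straightforward time-rescaling, showing that the substitution $y(s) := x(\tfrac{\tau}{\nu} s)$ provides a bijection between solutions of \eqref{eq:nu_IVP} and \eqref{eq:tau_IVP} that is simultaneously an isometry between $\ell^1_\tau$ and $\ell^1_\nu$ at the level of Taylor coefficients.

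First I would verify the ODE correspondence. If $x$ solves \eqref{eq:nu_IVP} on some interval about $0$, then by the chain rule
\[
\frac{dy}{ds}(s) = \frac{\tau}{\nu} \dot x\paren{\tfrac{\tau}{\nu} s} = \frac{\tau}{\nu} f\paren{x(\tfrac{\tau}{\nu} s)} = \frac{\tau}{\nu} f(y(s)),
\]
with $y(0) = x(0) = x_0$, so $y$ solves \eqref{eq:tau_IVP}. The converse follows from the inverse substitution $x(t) := y(\tfrac{\nu}{\tau} t)$, so solutions are in one-to-one correspondence.

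Next I would translate this into a relation on Taylor coefficients. Writing $x(t) = \sum_{j=0}^\infty x_j t^j$ and $y(s) = \sum_{j=0}^\infty y_j s^j$, the substitution $y(s) = x(\tfrac{\tau}{\nu} s)$ forces
\[
y_j = \paren{\tfrac{\tau}{\nu}}^j x_j \qquad \text{for all } j \in \nn.
\]
Consequently, for any sequence $\setof{x_j}$,
\[
\norm{y}_{1,\nu} = \sum_{j=0}^\infty \nu^j \abs{y_j} = \sum_{j=0}^\infty \nu^j \paren{\tfrac{\tau}{\nu}}^j \abs{x_j} = \sum_{j=0}^\infty \tau^j \abs{x_j} = \norm{x}_{1,\tau},
\]
so the two summability conditions are literally identical. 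Combined with the ODE correspondence, this gives both implications of the proposition.

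I do not expect any serious obstacle here; the whole content is the observation that the weight in $\ell^1_\nu$ transforms in exactly the way needed to absorb the Jacobian factor $(\tau/\nu)^j$ coming from time rescaling. The only care required is to match the constant in front of $f$ so that the chain rule produces precisely $\tfrac{\tau}{\nu} f(y)$ on the right-hand side, and to note that the substitution is invertible, which makes the equivalence symmetric.
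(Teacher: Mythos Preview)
Your argument is correct: the time rescaling $y(s)=x(\tfrac{\tau}{\nu}s)$ gives the bijection between solutions, and the induced coefficient relation $y_j=(\tau/\nu)^j x_j$ makes $\norm{y}_{1,\nu}=\norm{x}_{1,\tau}$ an identity, which is exactly the equivalence claimed. The paper itself does not include a proof of this proposition---it is stated and then immediately used---so there is nothing to compare against; your write-up is the natural argument the authors evidently regarded as routine.
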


Proposition \ref{prop:tau_nu_equivalence} says that choosing $\nu$ is equivalent to rescaling time in Equation \eqref{eq:IVP}. We exploit this equivalence by making an a-priori choice for our function space. Specifically, we will work exclusively in the space $\ell_1^1$ and thus, we will omit $\nu$ from the notation for the remainder of the paper and simply write $\ell^1$ in place of $\ell_1^1$. Similarly, we let $\dd := \dd_1$ denote the complex unit disc and our discussion of analytic functions of a scalar variable will always refer to the set $C^\omega(\dd)$. The trade-off for fixing $\nu = 1$ is that we must work with a modified form of Equation \eqref{eq:IVP} given by 
\begin{equation}
\label{eq:rescaled_ivp}
\dot x = \tau f(x) \qquad x(0) = x_0
\end{equation}
where $\tau$ is a time rescaling parameter.


Finally, we note that $C^\omega(\dd)$ is closed under point-wise multiplication. This gives rise to a multiplication operation on $\ell^1$ called the {\em  Cauchy product}. Specifically, the Cauchy product of $u,v \in \L$ is denoted as $u*v$ and given explicitly by the formula 
\begin{equation} 
\label{def:Cauchy_product}
(u \ast v )_n:=\sum_{k=0}^{n}u_{n-k}v_k.
\end{equation}
In fact, Merten's theorem implies that the Cauchy product makes $\ell^1$ into a Banach algebra. In particular, suppose $f,g \in C^\omega(\dd)$ are analytic functions with Taylor coefficients given by $u,v \in \L$, and let $w = u*v$. Then $w \in \L$ also and the function
\[
h(t) = \pwrs w_j t^j \qquad t \in \dd
\]
is well defined and satisfies $h(t) = f(t) g(t)$ as expected. Since $\ell^1$ is closed under products we define finite powers for Cauchy products in the obvious way by 
\[
u^k := \underbrace{u \ast u \dots \ast u}_{k \ \text{copies}}.
\] 
Evidently, it follows that if $u \in \ell^1$ then $u^k \in \L$ for any $k\in \nn$. To simplify some formulas involving powers of Cauchy products, we define 
\[
u^0 = \left(1, 0, 0, \dotsc\right)
\]
for any $u \in \ell^1$. 

\subsection{Taylor expansion of IVP solutions}
\label{sec:rescaled_taylor_expansion_ivp}
To motivate the construction of a fixed point problem, we consider the method of solving Equation \eqref{eq:rescaled_ivp} by power series expansion. We begin by considering  an ansatz for the solution to Equation \eqref{eq:rescaled_ivp} of the form
\begin{equation}
	\label{eq:scalar_soln_ansatz}
	x(t)=\sum_{j=0}^{\infty} a_jt^j \qquad a_j \in \rr. 
\end{equation}  
We want to prove that Equation \eqref{eq:scalar_soln_ansatz} defines an analytic function on some open interval containing zero by analyzing the coefficient sequence, $a(\tau) := \setof{a_j}_{j\in \nn} \in S$. Combining Proposition \ref{prop:ell1_vs_analytic} and Proposition \ref{prop:tau_nu_equivalence}, this is equivalent to proving that for some choice of $\tau$, $a(\tau) \in \ell^1$. 

For the moment, we suppose $\tau > 0$ is fixed and we suppress the dependence of $a$ on $\tau$. We formally plug Equation \eqref{eq:scalar_soln_ansatz} into Equation \eqref{eq:rescaled_ivp} to obtain 
\begin{equation}
\label{eq:scalar_match_powers}
\sum_{j=1}^{\infty} ja_j t^{j-1} = \tau f(x(t))=\tau\sum_{k=0}^{\infty} c_k \left(\sum_{j=0}^{\infty} a_j t^j - x_0 \right)^k.
\end{equation}
%
Now, we impose $a_0 = x_0$ in order to satisfy the initial condition, and define the sequence
\[
\tilde{a} := \left(0, a_1,a_2,\dotsc \right) 
\] 
so the right hand side of Equation \eqref{eq:scalar_match_powers} has the form
\begin{equation}
	\label{eq:matched_series_rhs1}
	\tau f(x(t)) = \tau\sum_{k=0}^{\infty} c_k \left(\sum_{j=1}^{\infty} a_j t^j \right)^k = \tau\sum_{k=0}^{\infty} c_k \sum_{j=0}^{\infty} \tilde{a}_j^k t^j
\end{equation}
where the expressions of the form $\tilde{a}_j^k$ appearing in Equation \eqref{eq:matched_series_rhs1}, and throughout this work, represent the $j^{\rm th}$ term of the $k$-fold convolution. Specifically, 
\[
\tilde{a}_j^k := (\underbrace{\tilde{a} \ast \tilde{a} \dots \ast \tilde{a}}_{k \text{ copies}})_j
\]
as opposed to the $k^{\rm th}$ power of the real number, $\tilde{a}_j$. This should not lead to confusion as the latter will not appear in this paper.

Now, after matching like powers of Equation \eqref{eq:matched_series_rhs1} with the left hand side of Equation \eqref{eq:scalar_match_powers}, we obtain a recursive formula for the terms in $a$ given by
\begin{equation}
\label{eq:scalar_recursion}
a_j := 
\begin{cases}
x_0 & j = 0 \\
\frac{\tau}{j}\sum_{k=0}^{j-1} c_k \tilde{a}^k_{j-1} &j \ge 1.
\end{cases}
\end{equation}

\subsection{Constructing the fixed point problem}
\label{sec:scaled_fp_problem}
Now, we want to construct appropriate choices for $X, U$, and $T$ as in Theorem \ref{thm:local_radii_polynomial}. We start with a definition.
\begin{definition}
	\label{def:tail_subspace}
	For any $N \in \nn$ we define the {\em tail subspace} of $S$ to be
	\begin{equation}
	S\tail = \{u \in S : u_j=0 \ \text{ for }  0 \le j \le N\}.
	\end{equation}
	Similarly, we define the tail subspace of $\ell^1$ by $X = S\tail \cap \ell^1$ and we note that $X$ is a closed subspace of $\ell^1$. Hence, $X$ is a Banach space under the norm inherited from $
	\ell^1$. We will denote this norm by $\norm{\cdot}_{X}$ to emphasize when we are working in this subspace. 
\end{definition}
Now, we define a Banach space to work in by supposing that $N \in \nn$ is fixed and $S\tail, X$ denote the tail subspaces as defined in Definition \ref{def:tail_subspace}. Let $a(\tau)$ denote the sequence satisfying Equation \eqref{eq:scalar_recursion} where now we emphasize the dependence of this sequence on the choice of $\tau$ explicitly. Let $\hat{a}(\tau)$ denote the truncation of $a(\tau)$ embedded into $\ell^1$ defined explicitly by 
\begin{equation}
\label{eq:local_truncation}
\hat{a}(\tau)_j :=  
\begin{cases}
0 & j = 0, \ \text{or} \ j > N \\
a_j(\tau) & 1 \leq j \leq N.
\end{cases}
\end{equation}
Equation \eqref{eq:scalar_recursion} leads us to define the $\tau$-parameterized family of maps, $T_\tau : X \to S\tail$, by the formula
\begin{equation}
\label{eq:scalar_tail_map}
T_\tau(u)_j := 
\begin{cases}
0 &0\le j \le N\\
\frac{\tau}{j}\sum_{k=1}^{j-1} c_k \left(\hat{a}(\tau) + u\right)^k_{j-1} &j > N.
\end{cases}
\end{equation}
We will show in the next section that $a(\tau)$ is the unique fixed point of $T_\tau$. However, we ultimately want to show that $\hat{a}(\tau) \in X$ and we note that the map defined in Equation \eqref{eq:scalar_tail_map} does not necessarily map back into $X$ as required for Theorem \ref{thm:local_radii_polynomial}. As a consequence, we must first define an appropriate open subset, $U \subset X$, on which to restrict $T$. 

With this in mind, we note that since $f$ is analytic on the interval $(x_0-b, x_0 + b)$, for any constant $b_* \in (0, b)$, there exists positive real constants $C$, $C^*$ and $C^{**}$, which satisfy the bounds
\begin{align}
\label{eq:scalar_C_bound}
\sum_{k=0}^{\infty} \abs{c_k} b_*^k&< C \\
\label{eq:scalar_C*_bound}
\sum_{k=1}^{\infty} k\abs{c_k} b_*^{k-1}&< C^*\\
\label{eq:scalar_C**_bound}
\sum_{k=2}^{\infty} k(k-1)\abs{c_k} b_*^{k-2}&< C^{**}.
\end{align}
This is a simple consequence of Cauchy's integral formula combined with Proposition \ref{prop:ell1_vs_analytic}. A proof can be found in \cite{Scheidemann2005}. Next, we note that $\norm{\hat{a}(\tau)}_{1}$ is monotonically increasing as a function of $\tau$ and by a simple computation we have the limits
\[
\lim\limits_{\tau \to 0} \norm{\hat{a}(\tau)}_{1} = \hat{a}_0 = 0 \qquad \lim\limits_{\tau \to \infty} \norm{\hat{a}(\tau)}_{1} = \infty. 
\]
Hence, there exists a unique $\tau_0$ such that 
\[
\norm{\hat{a}(2\tau_0)}_{1} = b_*,
\] 
and therefore, $\norm{\hat{a}(\tau)}_{1}  < b_*$ for all $0<\tau \le \tau_0$. Define positive constants 
\begin{equation}
\label{eq:local_max_error}
	r^* := b_*-\norm{\hat{a}(\tau_0)}_{1} > 0
\end{equation}
\begin{equation}
\label{eq:local_max_tau}
	\tau^* := \min \left(\tau_0,\frac{Nr^*}{C+r^* C^*} \right)
\end{equation}
and define the open subset
\begin{equation}
\label{eq:local_openset}
 U:=\left\{u\in X : \norm{u}_{X} < \frac{1}{2}r^* \right\}.
\end{equation}
Note that the choice of $b_*$ is not unique. However, for any $b_* \in (0,b)$, this construction produces an appropriate subset $U \subset X$. 

Next, we will prove that the restriction of $T_\tau$ to $U$ satisfies the requirements of Theorem \ref{thm:local_radii_polynomial}. We start by defining some notation. 
\begin{definition}
	Let $u \in S$ be any real sequence. The {\em pointwise positive sequence} associated to $u$, denoted by $\abs{u} \in S$, is the sequence with terms defined by
	\[
	\abs{u}_j = \abs{u_j}.
	\]
\end{definition}
With this notation defined, we have the following lemma.
\begin{lemma}
	\label{lem:local_T_FD}
	Fix $N \in \nn, b_* \in (0, b)$ with corresponding constant $\tau^*$ as defined by Equation \eqref{eq:local_max_tau}, and $U \subset X$ as defined by Equation \eqref{eq:local_openset}. Suppose $\tau \in (0, \tau^*]$ is fixed, and let $\hat{a}$ denote the corresponding sequence defined in Equation \eqref{eq:local_truncation} where the dependence on $\tau$ is suppressed. Let $T$ denote the corresponding map defined by Equation \eqref{eq:scalar_tail_map}. Then 
	\begin{enumerate}[(i)]
		\item $T(U) \subset X$ 
		\item $T: U \to X$ is \Frech differentiable. 
	\end{enumerate}
\end{lemma}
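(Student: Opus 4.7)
Both parts follow from exploiting the Banach algebra structure of $\ell^1$ described in Section~\ref{sec:prelim}, combined with the three bounds \eqref{eq:scalar_C_bound}--\eqref{eq:scalar_C**_bound} on the Taylor coefficients of $f$. The key preliminary observation is that for any $u \in U$, the sequence $v := \hat{a}(\tau) + u$ satisfies $\norm{v}_1 < b_*$. Indeed, since $\tau \le \tau^* \le \tau_0$, the definition of $r^*$ in \eqref{eq:local_max_error} gives $\norm{\hat{a}(\tau)}_1 \le \norm{\hat{a}(\tau_0)}_1 = b_* - r^*$, and $\norm{u}_X < r^*/2$ by the definition of $U$ in \eqref{eq:local_openset}, so $\norm{v}_1 < b_* - r^*/2 < b_*$.

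For part (i), the only nontrivial content is to show $T(u) \in \ell^1$, since $T(u) \in S\tail$ is immediate from the definition. I would first observe that $\hat{a}_0 = 0$ and $u_0 = 0$ together force $v_0 = 0$, hence $(v^k)_m = 0$ whenever $m < k$. Consequently the inner sum in \eqref{eq:scalar_tail_map} can be extended to range over all $k \ge 1$ without changing its value. Combining $\frac{1}{j} \le \frac{1}{N+1}$ with the Banach algebra inequality $\norm{v^k}_1 \le \norm{v}_1^k$ and rearranging the resulting double sum via Tonelli's theorem yields
\begin{equation*}
\norm{T(u)}_X \le \frac{\tau}{N+1}\sum_{k=1}^{\infty} |c_k|\, \norm{v}_1^k < \frac{\tau C}{N+1}
\end{equation*}
by \eqref{eq:scalar_C_bound}, establishing $T(u) \in X$.

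For part (ii), the candidate \Frech derivative is obtained by formal termwise differentiation using the product rule for Cauchy convolutions:
\begin{equation*}
DT(u)[h]_j := \begin{cases} 0 & 0 \le j \le N, \\ \frac{\tau}{j}\sum_{k=1}^{j-1} k\, c_k\, (v^{k-1} \ast h)_{j-1} & j > N. \end{cases}
\end{equation*}
First I would verify that $DT(u)$ is a bounded linear operator $X \to X$ by a computation parallel to part (i), yielding $\norm{DT(u)}_X \le \tau C^*/(N+1)$ via \eqref{eq:scalar_C*_bound}. Then I would establish \Frech differentiability by expanding the difference through the binomial theorem for Cauchy products (valid since $\ell^1$ is a commutative Banach algebra),
\begin{equation*}
(v+h)^k - v^k - k\, v^{k-1} \ast h = \sum_{i=2}^{k}\binom{k}{i} v^{k-i}\ast h^i,
\end{equation*}
whose $\ell^1$ norm is bounded by $\sum_{i=2}^{k}\binom{k}{i}\norm{v}_1^{k-i}\norm{h}_1^i$. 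Applying Taylor's theorem with remainder to the scalar function $t \mapsto (\norm{v}_1 + t)^k$ bounds this quantity by $\frac{k(k-1)}{2}(\norm{v}_1 + \norm{h}_1)^{k-2}\norm{h}_1^2$, which is at most $\frac{k(k-1)}{2} b_*^{k-2} \norm{h}_1^2$ for $\norm{h}_X$ small enough that $\norm{v+h}_1 < b_*$. Summing over $k$ against $|c_k|$ and invoking \eqref{eq:scalar_C**_bound} gives
\begin{equation*}
\norm{T(u+h) - T(u) - DT(u)[h]}_X \le \frac{\tau\, C^{**}}{2(N+1)}\, \norm{h}_X^2,
\end{equation*}
from which \Frech differentiability follows upon dividing by $\norm{h}_X$ and letting $h \to 0$.

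The main obstacle is essentially bookkeeping: justifying the rearrangements of double sums (underwritten by absolute convergence from $\hat{a}, u \in \ell^1$ together with the $C$ bound) and confirming that the standard binomial expansion carries over to convolution products. Once these are in hand, everything reduces to scalar estimates through the Banach algebra inequality $\norm{v_1 \ast v_2}_1 \le \norm{v_1}_1\norm{v_2}_1$, which is the mechanism that transfers the three bounds on $f$'s Taylor coefficients directly into bounds on $T$, $DT$, and the remainder.
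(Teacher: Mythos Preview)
Your proposal is correct and follows essentially the same approach as the paper: both parts reduce to the Banach-algebra inequality $\norm{u\ast v}_1 \le \norm{u}_1\norm{v}_1$ combined with the bounds \eqref{eq:scalar_C_bound}--\eqref{eq:scalar_C**_bound}, with the same candidate derivative and the same $O(\norm{h}_X^2)$ remainder estimate. The only cosmetic differences are that the paper factors $|h|^2$ out of the binomial expansion as a convolution (rather than invoking Taylor's theorem on the scalar map $t\mapsto(\norm{v}_1+t)^k$), and that where you write $\norm{v+h}_1 < b_*$ you actually need $\norm{v}_1 + \norm{h}_X < b_*$, which holds once $\norm{h}_X < r^*/2$ since $\norm{v}_1 < b_* - r^*/2$.
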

\begin{proof}
	To prove $(i)$, note that $T$ maps into $S\tail$ by definition, so it suffices to show that for any $u \in U$, $T(u) \in \ell^1$. By a direct computation we have
	\[\begin{aligned}
	\sum_{j=0}^{\infty} \abs{T(u)_j }&= \sum_{j=N+1}^{\infty} \abs{\frac{\tau}{j}\sum_{k=1}^{j-1} c_k \left(\hat{a} + u\right)^k_{j-1}}\\
	&\le \frac{\tau}{N+1} \sum_{j=N+1}^{\infty} \sum_{k=1}^{j-1} \abs{c_k} \abs{\left(\hat{a} + u\right)^k_{j-1}}\\
	&\le \frac{\tau}{N+1} \sum_{k=1}^{\infty} \abs{c_k} \norm{\hat{a} + u}_1^k\\
	&<  \frac{\tau}{N+1} \sum_{k=1}^{\infty} \abs{c_k} b_*^k\\
	& \leq \frac{\tau C}{N+1}
	\end{aligned}
	\]
where the second to last line follows from Equation \eqref{eq:local_max_error} combined with the bound $\norm{u}_X < \frac{1}{2}r^*$, and the last line from Equation \eqref{eq:scalar_C_bound}. Hence, $T(u) \in \ell^1$ as required. 

Now, we show that $T$ is \Frech differentiable. Fix $u \in U$ and define a linear operator, $A(u): U \to X$, by its action on $h \in U$ given by the formula
\begin{equation}
\label{eq:local_DT} 
\left(A(u) h\right)_j =
\left\{
\begin{array}{ll}
0 &0\le j \le N\\
\frac{\tau}{j} \sum\limits_{k=1}^{j-1}k c_k \left(h * \left(\hat{a} + u \right)^{k-1} \right)_{j-1}  & j > N.
\end{array}
\right.
\end{equation}
The claim that $A(u)$ maps $U$ into $X$ follows from a computation similar to the proof of $(i)$ by applying Equation \eqref{eq:scalar_C*_bound}. We want to show that $A(u)$ is the  \Frech derivative of $T$ at $u \in U$. Let $h \in U$ be arbitrary such that $u+h\in U$ as well. By directly applying the formulas for $T(u)$ and $A(u)$, we have 
\tiny
\begin{align*}
		\abs{T(u+h)-T(u)-A(u)h}_j = & \abs{\frac{\tau}{j}\sum_{k=0}^{j-1} c_k\left( \left(\left(\hat{a} + u +h  \right)^k\right)_{j-1}-\left(\left(\hat{a} + u \right)^k\right)_{j-1}-k \left(h * \left(\hat{a} + u \right)^{k-1} \right)_{j-1}\right)} \\
		= & \abs{\frac{\tau}{j}\sum_{k=2}^{j-1} c_k\sum_{i=2}^{k}\frac{k(k-1)}{i(i-1)}\binom{i-2}{k-2}\left(h^i * (\hat{a} +u)^{k-i}\right)_{j-1}}.
\end{align*}
\normalsize
Now, passing to the pointwise positive sequences for $\hat a + u$ and $h$ and summing over $j \in \nn$ we obtain the estimate

\footnotesize
\begin{align*}
	\norm{T(u+h)-T(u)-A(u)h}_X & \leq \sum_{j=N+1}^\infty \frac{\tau}{j}\sum_{k=2}^{j-1} k(k-1)\abs{c_k}\sum_{i=0}^{k-2}\binom{i}{k-2}\abs{ \left(\abs{h}^{i+2} * \left(\abs{\hat{a} +  u}\right)^{k-2 - i}\right)_{j-1}}\\
	= & \sum_{j=N+1}^\infty \frac{\tau}{j}\sum_{k=2}^{j-1} k(k-1)\abs{c_k}    \left((\abs{h}^2 * \left(\abs{\hat a + u} + \abs{h} \right)^{k-2} \right)_{j-1}\\
		\le& \frac{\tau \norm{h}^2_X}{N+1}\sum_{k=2}^{\infty} k(k-1)\abs{c_k} \left( \norm{\hat{a}}_1 + \norm{u}_X + \norm{h}_X\right)^{k-2}\\
	<&\frac{\tau \norm{h}^2_X}{N+1}\sum_{k=2}^{\infty} k(k-1)\abs{c_k}b_*^{k-2}\\
	\le&\frac{\tau C^{**}}{N+1} \norm{h}^2_{X}.
\end{align*}
\normalsize
where the second to last line follows from Equation \eqref{eq:local_max_error} combined with the bounds $\norm{h}_X < \frac{1}{2}r^*$ and $\norm{u}_X < \frac{1}{2}r^*$, and the last line follows from Equation \eqref{eq:scalar_C**_bound}.
 It follows that
	\begin{equation}
	\label{def: local_def for FD}
	\lim_{\norm{h}_{X}\to 0}\frac{\norm{T(u+h)-T(u)-A(u)h}_{X}}{\norm{h}_{X}} = 0
	\end{equation}
	which proves that $T$ is \Frech differentiable. Moreover, since $0 < \tau \leq \tau^*$ was arbitrary, we have shown that $T_\tau$ is \Frech differentiable for the entire family of $\tau$-parameterized maps defined by Equation \eqref{eq:scalar_tail_map}.
\end{proof}
Lemma \ref{lem:local_T_FD} proves that $DT_\tau$ is \Frech differentiable, and moreover, its derivative is given by the formula in Equation \eqref{eq:local_DT}.
For the remainder of this work, we let $DT_\tau(u)$ denote the \Frech derivative of $T_\tau$ at $u \in U$.


\subsection{Constructing the bounds}
To construct the bounds required for Theorem \ref{thm:local_radii_polynomial}, we begin by defining $\bar x := 0_{\ell^1} \in X$ which is the sequence of infinitely many zeroes. This choice is made independent of $N$ or $\tau$.  We are left with constructing $r_0$, $Y_\tau$ and $Z_\tau: (0, r^*) \to [0, \infty)$, such that the corresponding radii polynomial, $p_\tau(r_0) < 0$. Here the $\tau$ subscript emphasizes that these bounds depend on $\tau$.
The next lemma establishes the required bounds for $Y_\tau$ and $Z_\tau$. 
\begin{lemma}
	\label{lem:local_Y0_Z0_bounds}
	Fix $N \in \nn$ and let $S\tail$ be the tail subspace of order $N$. Fix $b_* \in (0, b)$ with corresponding constants $C, C^*, r^*$ and $\tau^*$ as defined in Equations \eqref{eq:scalar_C_bound}, \eqref{eq:scalar_C*_bound}, \eqref{eq:local_max_error}, \eqref{eq:local_max_tau}, and $U \subset X = S\tail \cap \ell^1$ as defined in Equation \eqref{eq:local_openset}. For $\tau \in (0, \tau^*]$, let $\hat{a}(\tau)$ denote the truncation defined in Equation \eqref{eq:local_truncation}, and $T_\tau : U \to X$ denotes the parameterized family of maps defined in Equation \eqref{eq:scalar_tail_map}. Define the constant
\begin{equation}
	Y_\tau:=\frac{\tau C}{N+1}
\end{equation}
and the constant function, $Z_\tau : (0, r^*) \to [0, \infty)$, by the formula
\begin{equation}
	Z_\tau(r):=\frac{\tau C^* }{N+1} \qquad \text{for all} \quad r \in (0, r*).
\end{equation}
Then, the following bounds hold
\begin{equation}
\label{eq:local_Y0}
\norm{T_\tau(0)}_{X}\leq Y_\tau 
\end{equation}
\begin{equation}
\label{eq:local_Z}
\sup \limits_{u \in \overbar{B_r(0)}} \norm{DT_\tau(u)}_{X} \leq Z_\tau(r) \qquad \text{for all} \quad r \in (0, r^*).
\end{equation}
\end{lemma}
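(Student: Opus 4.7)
The plan is to prove both bounds by direct computation using the Banach algebra structure of $\ell^1$ together with the analytic bounds \eqref{eq:scalar_C_bound} and \eqref{eq:scalar_C*_bound}. The two calculations parallel very closely the pointwise-positive estimates already carried out in Lemma \ref{lem:local_T_FD}, which will make most of the work mechanical. The only real subtlety is that the bounds must be made uniform in $u \in \overline{B_r(0)}$ and compatible with $\|\hat a(\tau) + u\|_1 \le b_*$, which is where the choice of $r^*$ and $\tau^*$ enters.

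For the $Y_\tau$ bound, I would substitute $u = 0$ into the defining formula \eqref{eq:scalar_tail_map}, pull out the factor $\tfrac{1}{j} \le \tfrac{1}{N+1}$, and then swap the order of summation to obtain
\begin{equation*}
\|T_\tau(0)\|_X \;\le\; \frac{\tau}{N+1}\sum_{k=1}^\infty |c_k|\,\|\hat a(\tau)\|_1^k.
\end{equation*}
Since $\tau \le \tau^* \le \tau_0$ the monotonicity observation preceding \eqref{eq:local_max_error} gives $\|\hat a(\tau)\|_1 \le \|\hat a(\tau_0)\|_1 = b_* - r^* < b_*$, so comparing with \eqref{eq:scalar_C_bound} yields the desired $\tfrac{\tau C}{N+1}$.

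For the $Z_\tau$ bound I would use the explicit formula \eqref{eq:local_DT} for the \Frech derivative. Fixing $u \in \overline{B_r(0)} \subset U$ and $h \in X$, the same manipulations give
\begin{equation*}
\|DT_\tau(u)h\|_X \;\le\; \frac{\tau}{N+1}\sum_{k=1}^\infty k|c_k|\,\|h\|_X\,\|\hat a(\tau)+u\|_1^{k-1}.
\end{equation*}
Because $\hat a(\tau)$ is supported in $\{1,\dots,N\}$ while $u \in X$ is supported in $\{N+1,N+2,\dots\}$, the supports are disjoint and $\|\hat a(\tau) + u\|_1 = \|\hat a(\tau)\|_1 + \|u\|_X \le (b_* - r^*) + r < b_*$ for any $r \in (0, r^*)$. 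Applying \eqref{eq:scalar_C*_bound} then dominates the series by $C^*$, producing the operator-norm bound $\tfrac{\tau C^*}{N+1}$ uniformly in $u \in \overline{B_r(0)}$, which is exactly $Z_\tau(r)$.

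The main obstacle, such as it is, lies in checking carefully that the chosen $\tau^*$ and $r^*$ actually guarantee $\|\hat a(\tau) + u\|_1 < b_*$ on the closed ball, so that \eqref{eq:scalar_C_bound} and \eqref{eq:scalar_C*_bound} are applicable; once that bookkeeping is verified, the series estimates are straightforward. The $\tau^*$ bound involving $\tfrac{Nr^*}{C + r^* C^*}$ will not explicitly be needed in this lemma (it becomes relevant when building the radii polynomial $p_\tau$), but the $\tau \le \tau_0$ half of \eqref{eq:local_max_tau} is what keeps $\hat a(\tau)$ in the correct ball.
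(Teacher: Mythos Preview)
Your proposal is correct and follows essentially the same approach as the paper: both arguments plug $u=0$ (respectively, the formula \eqref{eq:local_DT} for $DT_\tau$) into the definition, extract the factor $\tfrac{1}{j}\le\tfrac{1}{N+1}$, use the Banach-algebra inequality $\norm{v^k}_1\le\norm{v}_1^k$ to sum over $j$, and then invoke \eqref{eq:scalar_C_bound} (respectively \eqref{eq:scalar_C*_bound}) after checking $\norm{\hat a(\tau)}_1+\norm{u}_X<b_*$ via $\tau\le\tau_0$ and $r<r^*$. Your disjoint-support remark giving $\norm{\hat a+u}_1=\norm{\hat a}_1+\norm{u}_X$ is a pleasant sharpening, though the paper just uses the triangle inequality there, which already suffices.
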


\begin{proof}
	To establish the bound for $Y_\tau$, we compute
	\[
	\begin{aligned}
	\norm{T(0)}_{X} &=\sum_{j=N+1}^{\infty} \abs{ \frac{\tau}{j}\sum_{k=0}^{j-1} c_k \hat{a}^k_{j-1}}\\
	&\le \frac{\tau}{N+1} \sum_{k=1}^{\infty}\sum_{j=N+1}^{\infty}\abs{c_k}  \abs{\hat{a}^k_{j-1}}\\
	&\le \frac{\tau}{N+1} \sum_{k=1}^{\infty} \abs{c_k} \norm{\hat{a}^k }_1\\
	&\le \frac{\tau C}{N+1}
	\end{aligned}
	\]
	which proves the bound in Equation \eqref{eq:local_Y0}. 
	
	Next, we fix $0 < r < r^*$ and $u \in \overbar{B_r(0)}$, and suppose $h \in U$ is arbitrary. Then, we have the bound
	\[
	\begin{aligned}
	\norm{DT_\tau(u)h}_{X}&=\sum_{j=N+1}^{\infty}\frac{\tau}{j}\abs{\sum_{k=1}^{\infty}k c_k(h\ast\left(\hat{a} + u \right)^{k-1})_{j-1}}\\
	&\le \frac{\tau}{N+1}\sum_{k=1}^{\infty} k \abs{c_k} \norm{h\ast\left(\hat{a} + u \right)^{k-1}}_1\\
	&\le \frac{\tau\norm{h}_{X}}{N+1} \sum_{k=1}^{\infty} k \abs{c_k} \left(\norm{\hat{a}}_1 + \norm{u}_X \right)^{k-1}.
	\end{aligned}
	\]
	Dividing through by $\norm{h}_X$, we obtain the operator norm bound
	\[
	\norm{DT_\tau(u)}_X \leq  \frac{\tau}{N+1} \sum_{k=1}^{\infty} k \abs{c_k} \left(\norm{\hat{a}}_1 + \norm{u}_X \right)^{k-1}.
	\]
	Upon taking the supremum over all $u \in \overbar{B_r(0)}$ we obtain the bound
	\[
	\sup_{u \in \overbar{B_r(0)}} \norm{DT_\tau(u)}_{X} \leq \frac{\tau}{N+1} \sum_{k=1}^{\infty} k \abs{c_k} \left(\norm{\hat{a}}_1 + r \right)^{k-1},
	\]
	and finally, we obtain a bound which holds for any $r \in (0,r^*)$ given by 
	\begin{align}
	\sup_{u \in \overbar{B_r(0)}} \norm{DT_\tau(u)}_{X} & \leq \frac{\tau}{N+1} \sum_{k=1}^{\infty} k \abs{c_k} \left(\norm{\hat{a}}_1 + r^* \right)^{k-1} \\
	& \le\frac{\tau C^*}{N+1} 
	\end{align}
	where the last line follows from Equations \eqref{eq:scalar_C*_bound} and \eqref{eq:local_max_error}. 
\end{proof}
We note that our definition of $Z_\tau$ is Lemma \ref{lem:local_Y0_Z0_bounds} is in fact a constant function with no dependence on $r$. However, the statement of Theorem \ref{thm:ck_for_analytic_scalars} allows for $Z$ to depend on $r$. In practical applications of the radii polynomial approach, bounding higher order derivatives of $DT_\tau$ yields more accurate approximations and in this case, $Z$ does indeed depend on $r$. In order to highlight the similarity between these practical applications and our proof in the present work, we will continue to consider $Z_\tau$ as a function defined on the interval $(0, r^*)$, and write $Z_\tau(r)$ despite the fact that it is constant.

We have now constructed all of the necessary ingredients for applying Theorem \ref{thm:local_radii_polynomial} which we apply to prove a precursor to the \ck theorem for the scalar case. 
\begin{theorem}[\ck precursor]
	\label{thm:ck_for_analytic_scalars}
	Suppose $V \subset \rr$ is an open subset and $f : V \to \rr$ is analytic with a Taylor expansion centered at $x_0 \in V$ given by the formula
	\[
	f(x) = \sum_{k=0}^\infty c_k (x-x_0)^k 
	\]
	which converges for $x \in (x_0 - b, x_0 + b)\subseteq V$. For any $N \in \nn$, there exists $\tau > 0$ such that the map defined by Equation \eqref{eq:scalar_tail_map} has a fixed point. 
\end{theorem}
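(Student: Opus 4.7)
The plan is to invoke Theorem \ref{thm:local_radii_polynomial} directly with the ingredients already assembled in Sections 2.3 and 2.4. We take the Banach space $X$ and open set $U$ as defined in Equation \eqref{eq:local_openset}, the base point $\bar{x} := 0_{\ell^1}$, and the map $T_\tau$ from Equation \eqref{eq:scalar_tail_map}. Lemma \ref{lem:local_T_FD} supplies the required \Frech differentiability, and Lemma \ref{lem:local_Y0_Z0_bounds} supplies valid choices of the bounds $Y_\tau$ and $Z_\tau$. All that remains is to exhibit a radius $r_0$ (within the admissible range of the local set-up) at which the radii polynomial
\[
p_\tau(r) = Z_\tau(r)\, r - r + Y_\tau = \left(\frac{\tau C^*}{N+1} - 1\right) r + \frac{\tau C}{N+1}
\]
is strictly negative, for some $\tau \in (0, \tau^*]$.

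Because $Z_\tau$ is in fact constant in $r$, and because both $Y_\tau$ and $Z_\tau$ scale linearly in $\tau$, the structure of $p_\tau$ is very favorable: the coefficient of $r$ becomes negative once $\tau < (N+1)/C^*$, and the intercept $Y_\tau$ can simultaneously be made arbitrarily small by shrinking $\tau$. Concretely, first I would fix any convenient $r_0 \in (0, r^*)$ (for instance $r_0 = r^*/4$, which certainly lies within the ball on which the $Z_\tau$ bound is valid). Then the inequality $p_\tau(r_0) < 0$ rearranges to
\[
\tau \,(C^* r_0 + C) < (N+1)\, r_0,
\]
which manifestly holds for all sufficiently small positive $\tau$.

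The concluding step is therefore to set
\[
\tau := \min\!\left(\tau^*, \; \tfrac{1}{2} \cdot \frac{(N+1)\, r_0}{C^* r_0 + C}\right),
\]
which lies in $(0, \tau^*]$ and makes $p_\tau(r_0) < 0$. Theorem \ref{thm:local_radii_polynomial} then produces a unique fixed point of $T_\tau$ in $\overbar{B_{r_0}(0)} \subset X$, proving the statement.

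The main substantive work is really already done: the definitions of $r^*$, $\tau^*$, $U$, and the bounds $Y_\tau, Z_\tau$ in Sections 2.3--2.4 were engineered precisely so that the radii polynomial becomes negative once $\tau$ is sufficiently small. There is no serious obstacle here; the only care needed is to ensure the chosen $\tau$ respects the upper bound $\tau^*$ that was imposed to guarantee $T_\tau(U) \subset X$ and the validity of the $Y_\tau, Z_\tau$ estimates, which is handled by the explicit minimum above.
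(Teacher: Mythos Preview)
Your proposal is correct and follows essentially the same approach as the paper: both assemble the same $X$, $U$, $T_\tau$, invoke Lemmas~\ref{lem:local_T_FD} and~\ref{lem:local_Y0_Z0_bounds}, and then exhibit a pair $(\tau, r_0)$ making the linear radii polynomial negative. The only cosmetic difference is the order of selection---the paper fixes $\tau = \tau^*$ and then takes $r_0 = \frac{N}{N+1}r^*$, while you fix $r_0$ first (e.g.\ $r^*/4$) and then shrink $\tau$; both exploit the same linear scaling of $Y_\tau, Z_\tau$ in $\tau$.
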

\begin{proof}
Let $S\tail$ be the tail subspace of order $N$ and let $X = S\tail \cap \ell^1$. Fix $b_* \in (0, b)$ with corresponding constants $r^*$ and $\tau^*$ as defined by Equations \eqref{eq:local_max_error}, \eqref{eq:local_max_tau}, and $U \subset X$ as defined by Equation \eqref{eq:local_openset}. Let $\hat{a}(\tau^*)$ denote the truncation defined in Equation \eqref{eq:local_truncation}, and $T_{\tau^*} : U \to X$ denotes the map defined in Equation \eqref{eq:scalar_tail_map}. Define the radii polynomial
	\[
	\begin{aligned}
	p(r) := Z_{\tau^*}(r)r - r + Y_{\tau^*} \qquad \text{for} \quad  r \in (0, r^*)
	\end{aligned}
	\]	
	where $Y_{\tau^*}$ and $Z_{\tau^*}$ are the norm bounds for $T_{\tau^*}$ and $DT_{\tau^*}$ proved in Lemma \ref{lem:local_Y0_Z0_bounds}. Applying the formulas for $Y_{\tau^*}, Z_{\tau^*}$, we obtain the bound
	\begin{align*}
			p(r) & = \frac{\tau^* C^*}{N+1}r - r + \frac{\tau^* C}{N+1} \\
			& \leq \frac{N r^*}{(N+1)(C + r^* C^*)} \left(rC^* + C \right) - r
	\end{align*}
	for all $r \in (0,r^*)$.  \\
	\noindent
	 Define $r_0 := \frac{N}{N+1}r^* \in (0, r^*)$, and we obtain the bound
	\begin{align*}
	p(r_0) & <  \frac{N r^*}{(N+1)(C + r^* C^*)} \left(r^*C^* + C \right) - \frac{N}{N+1}r^* \\
	& = 0.
\end{align*}
By Theorem \ref{thm:local_radii_polynomial}, we conclude that $T_{\tau^*}$ has a fixed point in $U$. 
\end{proof}
Note that Theorem \ref{thm:ck_for_analytic_scalars} implies the \ck theorem under the additional assumption that fixed points of our construction correspond to analytic solutions of Equation \eqref{eq:IVP} which we prove in the next section.

\section{The \ck theorem for analytic vector fields}
\label{sec:ck_vectorfield}
We begin by extending the construction in Section \ref{sec:ck_scalar} to the case for which $f$ is a vector field. The main technical results are already handled in the scalar case and much of the work here amounts to setting up appropriate notation so that the previous fixed point problem is meaningful. Once this is accomplished, our proof of the \ck theorem follows by first proving that fixed points of our construction imply analytic solutions of \ref{eq:IVP}, and then proving a general version of Theorem \ref{thm:ck_for_analytic_scalars} for analytic vector fields. We begin by recalling the definition of analyticity for vector fields.   
\begin{definition}
	Let $V \subset \rr^n$ be an open subset and suppose $g: V \to \rr$ is a scalar function of the $n$ variables, $\setof{x_1,\dotsc,x_n}$, which we write as components of a vector, $x \in \rr^n$.  To avoid confusion over the meaning of indices we will index the components of a vector with superscripts by writing $x = \left(x^{(1)}, \dotsc, x^{(n)}\right)$. Then, $g$ is analytic if for every
	$x = \left(x^{(1)}, \dotsc, x^{(n)}\right) \in V$, and for each $1 \leq i \leq n$, there exists an open neighborhood, $V_{x,j} \subset \rr$, containing $x^{(j)}$ such that the formula
	\[
	g_{x,j}(t) := g\left(x^{(1)}, \dotsc, x^{(j-1)}, t , x^{(j+1)}, \dotsc, x^{(n)}\right) \qquad t \in V_{x,j},
	\]
	defines an analytic function. 
	
	This definition generalizes to vector fields as follows. Suppose $g: V \to \rr^n$ is a vector field which we write as a vector of component functions, $g(x) = \left(g^{(1)}(x), \dotsc, g^{(n)}(x)\right) \in \rr^n$. Then, we define $g$ to be analytic if for each $1 \leq i \leq n$, the component function, $g^{(i)} : V \to \rr$, is analytic. 
\end{definition}
In this setting, the analog of Equation \eqref{eq:rescaled_ivp} is the initial value problem
\begin{equation}
\label{eq:vector_ivp}
	\dot x = \tau f(x)  \qquad x(0) = x_0 \in V
\end{equation}
where $V \subset \rr^n$ is an open subset, $f: V \to \rr^n$ is an analytic vector field, and $\tau > 0$ is a time rescaling parameter. The solution of Equation \eqref{eq:vector_ivp} is a function, $x : \rr \to \rr^n$, which parameterizes a trajectory of the ODE initially passing through the point $x_0$ at time $t = 0$. Our goal is to prove that if $f$ is analytic, then for each $x_0 \in V$, there exists an open interval, $J(x_0) \subset \rr$ containing $0$, such that $x: J(x_0) \to \rr^n$ defines an analytic curve.

We will construct a fixed point problem similar to the scalar case. In this version, we describe this operator at a higher level for which the construction in Section \ref{sec:ck_scalar} is a special case. Next, we introduce a Banach space to work in and define some additional notation. 
\subsection{Products of sequence spaces}
We start by generalizing the sequence spaces introduced for scalar functions in Section \ref{sec:prelim} to the vector field setting. We consider coefficient sequences  in the product
\begin{equation}
\label{eq:seq_space_product}
S^n := \underbrace{S \times S \times \dots \times S}_{n \ \text{copies}}.
\end{equation}
For arbitrary $u \in S^n$, we write $u = \left(u^{(1)}, \dotsc, u^{(n)}\right)$ with $u^{(i)} \in S$ for $1 \leq i \leq n$. If $g : \dd \to \rr^n$ is an analytic curve, then $g$ is defined by a convergent Taylor series of the form
\begin{equation}
\label{eq:analytic_curve_notation1}
	g(z) = 
	\begin{pmatrix}
	g^{(1)}(z) \\
	\vdots \\
	g^{(n)}(z)
	\end{pmatrix}
	=
	\begin{pmatrix}
	\pwrs u^{(1)}_j z^j\\
	\vdots \\
	\pwrs u^{(n)}_j z^j
	\end{pmatrix}
	\qquad u^{(i)}_j \in \rr \quad \text{for all} \quad j\in \nn, \ 1 \leq i \leq n.
\end{equation}
Hence, $g$ is naturally identified with an element, $u \in S^n$, where $u^{(i)} \in S$ is the sequence of Taylor coefficients for the analytic scalar function, $g^{(i)} : \dd \to \rr$.

Often, it is advantageous to consider an alternative description of $S^n$ in which we define elements of $S^n$ as sequences of vectors in $\rr^n$. Specifically, we have the following equivalent characterization 
\begin{equation}
\label{eq:alt_def_Sn}
S^n = \setof{ \setof{u_j}_{j = 0}^\infty : u_j \in \rr^n, \ j \in \nn}.
\end{equation}
In this case, the equivalent expression for Equation \eqref{eq:analytic_curve_notation1} can be written as 
\begin{equation}
\label{eq:analytic_curve_notation2}
	g(z) = 
	\pwrs
	u_j z^j
	\qquad u_j \in \rr^n \quad \text{for all} \quad j \in \nn.
\end{equation}
For arbitrary $u \in S^n$ we write $u^{(i)} \in S$ to express the $i^{\rm th}$ component sequence, and we write $u_j \in \rr^n$ to denote the $j^{\rm th}$ term when we consider $u$ to be an infinite sequence of real vectors. 

Following the radii polynomial approach and the constructions in Section \ref{sec:ck_scalar}, we want to work in a Banach space of absolutely summable sequences. The appropriate space for representing analytic curves would be a product of the form $\ell^1_{\nu_1} \times \ell^1_{\nu_2} \times \dots \ell^1_{\nu_n}$. By an easy generalization of Proposition \ref{prop:tau_nu_equivalence}, we can take $\nu_i = 1$ for $1 \leq i \leq n$. 
With this in mind, we define the product
\[
(\ell^1)^n := \underbrace{\ell^1 \times \ell^1 \times \dots \times \ell^1}_{n \ \text{copies}}
\]
where we note the inclusion, $(\ell^1)^n \subset S^n$. We equip $(\ell^1)^n$ with the norm defined by 
\[
\norm{u}_{\infty} := \max\setof{\norm{u^{(1)}}_1, \norm{u^{(2)}}_1 , \dotsc, \norm{u^{(n)}}_1}
\]
which makes $(\ell^1)^n$ into a Banach space.  
Before continuing to the construction of the fixed point operator, we introduce notation to connect analytic functions and their Taylor coefficient sequences. 
\begin{definition}
	\label{def:taylor_transformation}
	Let $C^\omega(\dd, \rr^n)$ denote the space of parameterized curves which are analytic on $\dd$. The {\em Taylor coefficient map}, $\tlr : C^\omega(\dd, \rr^n) \to S^n$, is the linear operator which maps an analytic function to its sequence of Taylor coefficients. Specifically, $u = \tlr g \in S^n$ is the sequence defined by the formula
	\[
	u_j = 
	\begin{cases}
	g(0) & j = 0 \\
	\frac{g^{(j)}(0)}{j!} & j \geq 1.
		\end{cases}
	\]
	We define the ``inverse'' Taylor coefficient map by the formula 
	\[
	\label{def:inverse_taylor_transformation}
	\tlr^{-1} u = \pwrs u_j z^j,
	\]
	where we note that strictly speaking, $\tlr^{-1}$ is not a true inverse since $\tlr^{-1} u$ does not generally define an analytic function. Nevertheless, $\tlr^{-1} u$ is well defined as a formal power series and as we make no assumption about its convergence this notation should not present any ambiguity. 
\end{definition}
Now, we have all of the necessary ingredients to describe the construction of the fixed point operator.

\subsection{Constructing the fixed point problem}
Our first goal is to construct a fixed point problem to which we will apply Theorem \ref{thm:local_radii_polynomial}. We  start by noting that Equation \eqref{eq:vector_ivp} has a unique smooth solution, $x: J(x_0) \to \rr^n$, which follows from the same bootstrap argument as in the scalar case. Therefore, the sequence $\tlr(x) \in S^n$, is well defined.

Following the radii polynomial approach, we want to identify a fixed point problem which has a solution if and only if there exists some $\tau$ such that $a(\tau) \in (\ell^1)^n$. Next, we extend Definition \ref{def:tail_subspace} to $S^n$. 
\begin{definition}
\label{def:field_tail_subspace}
For a fixed $N \in \nn$, we define the {\em tail subspace} of order $N$ to be
\begin{equation}
\label{eq:vector_tail_subspace}
S\tail^n := \setof{u \in S^n : u^{(i)}_j = 0 \ \text{for} \ 0 \leq j \leq N, \ 1 \leq i \leq n}.
\end{equation} 
We let $X := S\tail^n \cap (\ell^1)^n$ denote the space of absolutely summable tails. Note that $X$ is a closed subspace of $(\ell^1)^n$ which makes $X$ into a Banach space under the norm inherited from $(\ell^1)^n$ and we denote this norm by $\norm{\cdot}_X$.
\end{definition}


Our fixed point problem will be formulated on the Banach space, $X$, given in Definition \ref{def:field_tail_subspace}. Specifically, we describe a parameterized family of maps, $T_\tau : X \to S^n\tail$, whose fixed points characterize the solutions of Equation \eqref{eq:vector_ivp}. Our construction for $T_\tau$ in the general case is decomposed as a composition of maps defined on $S^n$ which simplifies its analysis. We begin by defining a functional analytic extension of a smooth function defined on $\rr^n$, to a corresponding induced map on $S^n$. 
\begin{definition}
	Let $g$ be a formal power series in the variables $\setof{x^{(1)}, \dotsc, x^{(n)}}$ defined with multi-indices by the formula
	\[
	g(x) = \sum_{\alpha \in \nn^n} u_\alpha x^\alpha \qquad \text{where} \quad  u_\alpha \in \rr, \ x^\alpha = \prod_{i=1}^{n} \left(x^{(i)}\right)^{\alpha^{(i)}}.
	\]
	Formally, $g : \rr^n \to \rr$, defines a scalar valued function on $\rr^n$ and we note that evaluation of $g$ only requires evaluating sums and products. Hence, $g$ induces a map, $\phi_g : S^n \to S$, defined by the formula
	\[
	\phi_g(u) := \tlr \circ g(\tlr^{-1} u).
	\]
	We refer to this induced map as the {\em $S$-extension} of $g$. This generalizes to vector fields in the obvious way. If $g(x) = \left(g^{(1)}(x), \dotsc, g^{(n)}(x) \right)$ is a vector field where for $1 \leq i \leq n$, $g^{(i)}(x)$ is given by a power series, then the $S^n$-extension of $g$ denoted by $\phi_g : S^n \to S^n$, is defined by the formula
	\[
	\phi^{(i)}_g(u) = \tlr \circ g^{(i)}(\tlr^{-1} u) \qquad \text{for} \quad 1 \leq i \leq n.
	\]
\end{definition}
Next, we define two operators on $S^n$ which are important for our fixed point construction. 
\begin{definition}
	The {\em integration} map, denoted by $I: S^n \to S^n$, is the function whose action on $u \in S^n$ is defined by
	\begin{equation}
	\label{eq:integration_map}
	I(u)_j = \left\{
	\begin{array}{ll}
	0 & j = 0 \\
	\frac{1}{j}u_{j-1} & j \geq 1.
	\end{array}\right.
	\end{equation}
\end{definition}
\begin{definition}
For any $N \in \nn$, let $S\tail^n$ denote corresponding tail subspace of order $N$. We define the {\em tail projection} map, $\pi_N : S^n \to S\tail^n$, by its action on $u \in S^n$ given by the formula
\[
\pi_N(u)_j = 
\begin{cases}
0_{\rr^n} & 0 \leq j \leq N \\
u_j & j > N.
\end{cases}
\]
Note that the restriction of $\pi_N$ on $(\ell^1)^n$ is the induced map, $\pi_N : (\ell^1)^n \to X$. 
\end{definition}
Now, we describe the fixed point problem construction for vector fields. Let $\tilde x_0$ denote the embedding of $x_0$ into $(\ell^1)^n$ defined by 
\[
\tilde x_0 := \left(x_0, 0_{\rr^n}, 0_{\rr^n}, \dotsc \right).
\]
Suppose $\tau > 0$ and define the parameterized sequence $a(\tau) \in S^n$ by the formula
\begin{equation}
\label{eq:field_eq for recursion}
a(\tau)^{(i)}_j := 
\begin{cases}
x_0 & j = 0 \\
\frac{\tau}{j} \left(\phi_{f^{(i)}}(a(\tau) - \tilde x_0)\right)_{j-1} &j \ge 1
\end{cases}
\qquad  \text{for} \quad 1 \leq i \leq n.
\end{equation}
Fix $N \in \nn$, and define the truncation
\begin{equation}
\label{eq:field_truncation}
\hat a := a - \tilde x_0 - \pi_N(a) \in (\ell^1)^n,
\end{equation}
and the parameterized family of maps, $T_\tau : X \to S\tail^n$, by the formula
\begin{equation}
\label{eq:field_tail_map}
T_\tau(u) = \tau \pi_N \circ I \circ \phi_f \left(\hat{a}(\tau) + u \right).
\end{equation}
Note that the construction in Section \ref{sec:ck_scalar} is a special case of this map when $n = 1$. Expressing $T_\tau$ as a composition of operators makes it easy to provide an explicit formula for $T_\tau$. However, it is no longer obvious that the Taylor coefficients of our IVP solution must be a fixed point of $T_\tau$. The next lemma proves this is the case. 
\begin{lemma}
	\label{prop:characterization_of_T} 
	Fix $N \in \nn$, let $T_\tau : X \to S^n$ be the map defined by Equation \eqref{eq:field_tail_map} and suppose that for some $\tau > 0$, $T_\tau$ has a fixed point. Then, Equation \eqref{eq:vector_ivp} has a unique solution which is analytic on the open interval $(-1,1)$. 
\end{lemma}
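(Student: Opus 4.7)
The plan is to extract from any fixed point $u^* \in U$ of $T_\tau$ an analytic curve satisfying the IVP. First I would set $a^* := \tilde x_0 + \hat a(\tau) + u^*$ and observe that $a^* \in (\ell^1)^n$: $\tilde x_0$ and $\hat a(\tau)$ have only finitely many nonzero entries, while $u^* \in X \subset (\ell^1)^n$. Defining $x^* := \tlr^{-1} a^*$ then produces an element of $C^\omega(\dd)$, hence an analytic curve on the real interval $(-1,1)$, and $x^*(0) = a^*_0 = x_0$ by construction, so the initial condition is automatic.

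The main content is to verify that $a^*$ satisfies the recursion in Equation \eqref{eq:field_eq for recursion}, i.e.\ that $a^*_j = \frac{\tau}{j}\bigl(\phi_f(a^* - \tilde x_0)\bigr)_{j-1}$ for every $j \ge 1$. For $j > N$ this is essentially a rewriting of $T_\tau(u^*) = u^*$: since $a^* - \tilde x_0 = \hat a(\tau) + u^*$ and $a^*_j = u^*_j$ for $j > N$, unwinding the definitions of $T_\tau$, $\pi_N$ and $I$ in Equation \eqref{eq:field_tail_map} gives the claim directly. The case $1 \le j \le N$ is more delicate, since for these modes $a^*_j = \hat a(\tau)_j = a(\tau)_j$ and the natural recursion is phrased in terms of $a(\tau) - \tilde x_0$ rather than $a^* - \tilde x_0$. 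The key observation is that $\phi_f$ is built from Cauchy products applied to multi-indexed power series, so $\bigl(\phi_f(v)\bigr)_{j-1}$ depends only on the initial modes $v_0, \dots, v_{j-1}$. Since $u^*$ vanishes in modes $\le N$ and $\hat a(\tau)$ coincides with $a(\tau) - \tilde x_0$ in modes $\le N$, the first $j$ modes of $a^* - \tilde x_0$ and of $a(\tau) - \tilde x_0$ agree whenever $j \le N$, and the recursion transfers from $a(\tau)$ to $a^*$.

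Finally, I would rewrite the recursion uniformly as $a^* = \tilde x_0 + \tau I\bigl(\phi_f(a^* - \tilde x_0)\bigr)$ and apply $\tlr^{-1}$. Because $\phi_f(a^* - \tilde x_0)$ is by construction the sequence of Taylor coefficients of $t \mapsto f(x^*(t))$ (the shift $a^* - \tilde x_0$ compensates for the fact that $\phi_f$ is built from the expansion of $f$ around $x_0$), the image of this identity under $\tlr^{-1}$ is the integral equation $x^*(t) = x_0 + \tau \int_0^t f(x^*(s))\,ds$, valid on the open neighborhood of $0$ on which $x^*(s) \in V$. Differentiating gives $\dot x^* = \tau f(x^*)$, so $x^*$ is a $C^1$ solution of the IVP on its interval of existence; uniqueness then follows from the standard Picard--Lindel\"of argument reviewed for the scalar case. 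The main technical obstacle is the locality argument in the $1 \le j \le N$ case, which requires inspecting the construction of $\phi_f$ to confirm that truncating the tail does not alter the initial Taylor coefficients; the remainder is a direct dictionary between the fixed-point equation and the integral form of the ODE.
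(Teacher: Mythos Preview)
Your proposal is correct and follows essentially the same approach as the paper: both reconstruct the full coefficient sequence $\tilde x_0 + \hat a(\tau) + u^*$ from the fixed point, verify that it satisfies the recursion \eqref{eq:field_eq for recursion}, and then identify the associated power series as an analytic solution of \eqref{eq:vector_ivp}. The only cosmetic differences are that you make the locality argument for the modes $1 \le j \le N$ explicit (the paper absorbs this into the phrase ``by construction'') and you pass through the integral form of the ODE rather than matching Taylor coefficients of $\dot x$ and $\tau f(x)$ directly.
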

\begin{proof}
		Let $a(\tau)$ denote the sequence defined by Equation \eqref{eq:field_eq for recursion}.  By construction, if $u$ is any fixed point of $T_\tau$, then $u + \hat{a}(\tau) + \tilde x_0$ satisfies the recursive formula in Equation \eqref{eq:field_eq for recursion}. It follows that $u = a(\tau)\tail$ since Equation \eqref{eq:field_eq for recursion} is completely determined by a choice of $\tau, x_0$. Therefore, $a(\tau)\tail \in X$ is the unique fixed point of $T_\tau$. 	
		Observe that $\tlr^{-1} (a(\tau))$ defines an analytic function on $(-1,1)$ given by the formula
		\[
		x(t) :=  \tlr^{-1} (a(\tau)) =\sum_{j=0}^{\infty} a(\tau)_j t^j.
		\]		
		Since $f$ is analytic, it has a convergent power series expansion centered at $x_0$ of the form
		\[
		f(x) = \sum_{\alpha \in \nn^n} c_\alpha (x-x_0)^\alpha.
		\]
		By composing $x$ with $\tau f$, we obtain the formula
		\begin{equation}
		\label{eq:field_recursive_right}
		\tau f(x(t)) = 
		 \sum_{\alpha \in \nn^n} c_\alpha \left(\sum_{j=1}^{\infty} a(\tau)_j t^j \right)^\alpha
		\end{equation}
		where we have used the fact that $a(\tau)_0 = x_0$ by definition. 
		By applying $\tlr$ to the right hand side of \eqref{eq:field_recursive_right} and expressing it in terms of the $\phi$ operator we obtain the formula 
		\[
		\tlr \left(\tau f(x(t))\right)_j =  \tau (\phi_f (a(\tau)-\tilde x_0))_{j-1} \qquad \text{for all} \quad j \geq 1.
		\]
		On the other hand, we can differentiate Equation \eqref{eq:field_recursive_right} term by term to obtain the formula
		\[
		\left(\tlr \dot{x}\right)_j = j a(\tau)_j \qquad \text{for all} \quad j \geq 1.
		\]
	It follows from Equation \eqref{eq:field_eq for recursion} that
		\[
		\tlr \left(\tau f(x(t))\right) = \left(\tlr \dot{x}\right)
		\]
		proving that $x$ satisfies Equation \eqref{eq:vector_ivp}. 
\end{proof}
The last ingredient in our fixed point problem is to define an appropriate open subset, $U \subset X$, on which we will apply Theorem \ref{thm:local_radii_polynomial}. If $f : V \to \rr^n$ is analytic and $x_0 \in V$, then each component of $f$ can be defined by power series converging (at least) for all 
\[
x \in \left(x_0^{(1)} - b_1, x_0^{(1)}  + b_1\right) \times \dots \times \left(x_0^{(n)} - b_n, x_0^{(n)}  + b_n\right)
\] where $b_i > 0$ for $1 \leq i \leq n$. We define $b_0 := \min \setof{b_i : 1 \leq i \leq n}$, and note that for $1 \leq i \leq n$,  the component, $f^{(i)} : V \to \rr^n$, defines an analytic function. Hence, $f^{(i)}$ has a power series centered at $x_0$ of the form
\[
f^{(i)}(x) = \sum_{\alpha \in \nn} c_\alpha^{(i)} (x - x_0)^\alpha,
\]
converging at least for $x \in (-b_0, b_0)^n$. We also note the following multi-variable analog of Equations \eqref{eq:scalar_C_bound}, \eqref{eq:scalar_C*_bound}, and \eqref{eq:scalar_C**_bound}. For any $b_* < b_0$, there exist positive constants $C_i, C_i^*$ and $C_i^{**}$, possibly depending on $b_{*}$, satisfying the bounds 
\begin{align*}
	\sum_{\alpha \in \nn^n} \abs{c_\alpha^{(i)}} b_*^{\abs{\alpha} } & < C_i \\
	\sum_{\alpha \in \nn^n} \sum_{m=1}^n \alpha_{m} \abs{c_\alpha^{(i)}} b_*^{\abs{\alpha} -1}&  < C_i^*\\
	\sum_{\alpha \in \nn^n} \sum_{m_1=1}^n \sum_{m_2=1}^n \alpha_{m_1} \alpha_{m_2} \abs{c_\alpha^{(i)}} b_*^{\abs{\alpha} -2} & < C_i^{**}.
\end{align*}
The proof follows immediately from Proposition \ref{prop:ell1_vs_analytic} and the multivariate integral Cauchy integral formula which can be found in \cite{Scheidemann2005}. We let $C, C^*$, and $C^{**}$ denote the maximum values for these constants taken over $1 \leq i \leq n$. Then, we have the bounds
\begin{align}
\label{eq:field_C_bound}
\sum_{\alpha \in \nn^n} \abs{c_\alpha^{(i)}} b_*^{\abs{\alpha}}  < C^* \\
\label{eq:field_C*_bound}
\sum_{\alpha \in \nn^n} \sum_{m=1}^n \alpha_{m} \abs{c_\alpha^{(i)}} b_*^{\abs{\alpha} -1}  < C^*\\
\label{eq:field_C**_bound}
\sum_{\alpha \in \nn^n} \sum_{m_1=1}^n \sum_{m_2=1}^n \alpha_{m_1} \alpha_{m_2} \abs{c_\alpha^{(i)}} b_*^{\abs{\alpha} -2}  < C^{**} 
\end{align}
which hold for all $1 \leq i \leq n$. We apply these bounds to define an appropriate subset, $U \subset X$, on which to restrict $T_\tau$ which is similar to the scalar case. Note that $\norm{\hat{a}(\tau)}_\infty$ is monotonically increasing as a function of $\tau$ since each component has this property. Moreover, we have the limits
\[
\lim\limits_{\tau \to 0} \norm{\hat{a}(\tau)}_\infty = 0 \qquad \lim\limits_{\tau \to \infty}  \norm{\hat{a}(\tau)}_\infty  = \infty
\]
and we define $\tau_0 > 0$ to be the unique real number satisfying $\norm{\hat{a}(2\tau_0)}_\infty = b_*$. As in the scalar case, we define the following
\begin{equation}
\label{eq:field_max_error}
r^* := b_*-\norm{\hat{a}(\tau_0)}_{\infty}
\end{equation}
\begin{equation}
\label{eq:field_max_tau}
\tau^* = \min\left(\tau_0,\frac{Nr^* }{C+r^* C^*}\right)
\end{equation}
and the open subset
\begin{equation}
\label{eq:field_openset}
U:=\left\{u\in X : \norm{u}_\infty < \frac{1}{2}r^* \right\}.
\end{equation}
This completes the construction of the fixed point problem for the vector field case. Next, we have a generalization of Lemma \ref{lem:local_T_FD} to vector fields.
\begin{lemma}
	\label{lem:field_T_FD}
	Fix $N \in \nn$ and $b_* \in (0, b)$, with corresponding constants $r^*$ and $\tau^*$ as defined by Equations \eqref{eq:field_max_error}, \eqref{eq:field_max_tau}, and $U \subset X$ as defined by Equation \eqref{eq:field_openset}. Let $\hat{a}(\tau)$ denote the sequence defined in Equation \eqref{eq:field_truncation}, and $T_\tau$ denotes the map defined by Equation \eqref{eq:field_tail_map}. Then, for all $\tau \in (0, \tau^*]$, the following statements hold 
	\begin{enumerate}[(i)]
		\item $T_\tau(U) \subset X$.
		\item $T_\tau: U \to X$ is \Frech differentiable. In particular, the action of $DT_\tau(u)$ on $h = \left(h^{(1)}, \dotsc, h^{(n)}\right) \in U$ is given by the formula
		\[
		\left(DT_\tau(u) h\right)^{(i)} = \sum_{m = 1}^{n} \left(\tau \pi_N \circ I \circ \phi_{\nabla f^{(i)}}(\hat{a}(\tau) + u)\right)^{(m)}*h^{(m)}
		\]
		where $\nabla f^{(i)}(x) = \left(\frac{\partial f^{(i)}}{\partial x_1},\frac{\partial f^{(i)}}{\partial x_2},\cdots, \frac{\partial f^{(i)}}{\partial x_n}\right)$ denotes the gradient vector of $f^{(i)}$. 
	\end{enumerate}
\end{lemma}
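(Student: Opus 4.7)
The plan is to parallel closely the scalar proof of Lemma \ref{lem:local_T_FD}, substituting multi-variable Cauchy convolutions for scalar ones and invoking the vector-field bounds \eqref{eq:field_C_bound}--\eqref{eq:field_C**_bound} in place of \eqref{eq:scalar_C_bound}--\eqref{eq:scalar_C**_bound}. For part (i), the inclusion $T_\tau(u) \in S\tail^n$ is automatic from the projection $\pi_N$ built into the definition of $T_\tau$, so the work reduces to showing that each component of $T_\tau(u)$ is absolutely summable. Working componentwise, I would estimate
\[
\norm{T_\tau(u)^{(i)}}_1 \le \frac{\tau}{N+1} \norm{\phi_{f^{(i)}}(\hat{a}(\tau) + u)}_1 \le \frac{\tau}{N+1} \sum_{\alpha \in \nn^n} \abs{c_\alpha^{(i)}} \norm{\hat{a}(\tau) + u}_\infty^{\abs{\alpha}},
\]
where the first inequality uses the factor $1/j \le 1/(N+1)$ inherited from the integration map acting on tail sequences, and the second uses that $\phi_{f^{(i)}}$ substitutes the power series for $f^{(i)}$ coefficient-by-coefficient, combined with the sub-multiplicativity $\norm{u^\alpha}_1 \le \norm{u}_\infty^{\abs{\alpha}}$ of componentwise Cauchy convolution. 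The construction of $r^*$ together with the constraint $\norm{u}_\infty < r^*/2$ give $\norm{\hat{a}(\tau) + u}_\infty < b_*$, so invoking \eqref{eq:field_C_bound} completes part (i) and in fact bounds $\norm{T_\tau(u)^{(i)}}_1$ by $\tau C/(N+1)$.

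For part (ii), I would define the candidate derivative $A(u) \colon X \to X$ by the formula appearing in the lemma statement and verify, mimicking the estimate in part (i) but invoking \eqref{eq:field_C*_bound}, that $A(u) h$ indeed lies in $X$ for each $h \in U$. The crux is then showing
\[
\lim_{\norm{h}_X \to 0} \frac{\norm{T_\tau(u+h) - T_\tau(u) - A(u) h}_X}{\norm{h}_X} = 0.
\]
To estimate the numerator I would expand the difference $\phi_{f^{(i)}}(\hat{a} + u + h) - \phi_{f^{(i)}}(\hat{a} + u) - \sum_{m=1}^{n} (\phi_{\nabla f^{(i)}}(\hat{a}+u))^{(m)} * h^{(m)}$ using a multi-variable analogue of the combinatorial identity used in the scalar proof: for each multi-index $\alpha$, the Taylor remainder of the convolution monomial $y \mapsto y^\alpha$ in $(\ell^1)^n$ is a double sum over ordered index pairs $(m_1, m_2) \in \{1, \dotsc, n\}^2$, each contributing a convolution product involving $h^{(m_1)} * h^{(m_2)}$ and a non-negative power of $\abs{\hat{a} + u} + \abs{h}$, weighted by $\alpha_{m_1} \alpha_{m_2}$. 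Summing against $\abs{c_\alpha^{(i)}}$, passing to pointwise positive sequences, and applying $\tau \pi_N \circ I$ together with sub-multiplicativity and \eqref{eq:field_C**_bound} should yield
\[
\norm{T_\tau(u+h) - T_\tau(u) - A(u) h}_X \le \frac{\tau C^{**}}{N+1} \norm{h}_X^2,
\]
exactly mirroring the scalar conclusion.

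The main technical obstacle I anticipate is bookkeeping the multi-index Taylor remainder cleanly: the scalar proof exploited the tidy identity involving the coefficient $k(k-1)$ to collapse the binomial sum, and in the vector-field case this must be replaced by an expansion over ordered pairs $(m_1, m_2) \in \{1, \dotsc, n\}^2$ with weight $\alpha_{m_1} \alpha_{m_2}$, matching precisely the form of \eqref{eq:field_C**_bound}. Once that identity is in hand, the subsequent norm estimates go through exactly as in the scalar case because $(\ell^1)^n$ is a Banach algebra under componentwise Cauchy convolution. Since the entire argument is uniform in $\tau \in (0, \tau^*]$, both conclusions automatically hold for the whole parameterized family of maps $T_\tau$.
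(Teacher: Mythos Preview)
Your proposal is correct and follows essentially the same approach as the paper, which simply remarks that the proof is an easy generalization of Lemma~\ref{lem:local_T_FD} with the bound in Equation~\eqref{eq:field_C**_bound} controlling all second-order (and higher) partial derivatives of $f$. In fact you supply more detail than the paper does; your identification of the multi-index Taylor remainder as a double sum over ordered pairs $(m_1,m_2)$ weighted by $\alpha_{m_1}\alpha_{m_2}$ is exactly the mechanism the paper has in mind when it points to Equation~\eqref{eq:field_C**_bound}.
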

%
%
The proof is an easy generalization of the proof in Lemma \ref{lem:local_T_FD} where the bound in Equation \eqref{eq:field_C**_bound} is now applied to control all of the $2^{\rm nd}$ order (and higher) partial derivatives of $f$. We note that the formula for $DT_\tau(u)$ is nothing more than the operator obtained by applying the $S^n$-extension map to each component of the Jacobian matrix for $f$.
\subsection{Constructing the bounds}
Now, we construct the bounds required for applying Theorem \ref{thm:local_radii_polynomial}. Similar to the scalar case, we choose $\bar x = \left(0_{\rr^n},0_{\rr^n},0_{\rr^n},\dotsc\right) \in (\ell^1)^n$. The necessary bounds are provided by the following generalization of Lemma \ref{lem:local_Y0_Z0_bounds}.
\begin{lemma}
	\label{lem:field_Y0_Z0_bounds}
	Fix $N \in \nn$ and $b_* \in (0, b)$ with corresponding constants $r^*, \tau^*$ as defined by Equations \eqref{eq:field_max_error} and \eqref{eq:field_max_tau}, and $U \subset X$ as defined by Equation \eqref{eq:field_openset}. Let $\hat{a}(\tau)$ denote the truncation defined in Equation \eqref{eq:field_truncation}, and $T_\tau : U \to X$ denotes the parameterized family of maps defined in Equation \eqref{eq:field_tail_map}. For $\tau \in (0, \tau^*]$, define the constant
	\begin{equation}
	Y_\tau:=\frac{\tau C}{N+1}
	\end{equation}
	and the constant function, $Z_\tau : (0, r^*) \to [0, \infty)$, by the formula
	\begin{equation}
	Z_\tau(r):=\frac{\tau C^* }{N+1}.
	\end{equation}
	Then, the following bounds hold
	\begin{equation}
	\label{eq:field_Y0}
	\norm{T_\tau(0)}_\infty \leq Y_\tau.
	\end{equation}
	\begin{equation}
	\label{eq:field_Z}
	\sup \limits_{u \in \overbar{B_r(0)}} \norm{DT_\tau(u)}_\infty \leq Z_\tau(r) \qquad \text{for all} \quad r \in (0, r^*).
	\end{equation}
\end{lemma}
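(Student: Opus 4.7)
The plan is to adapt the scalar proof of Lemma \ref{lem:local_Y0_Z0_bounds} by working componentwise in the product norm $\norm{\cdot}_\infty$ and replacing scalar geometric series with multi-index expansions. Since $\norm{\cdot}_\infty$ is the maximum of the component $\ell^1$-norms, it suffices to establish each bound for an arbitrary fixed $1 \leq i \leq n$ and then take the maximum over $i$. The key technical ingredient is the extension of the Banach algebra property of $\ell^1$ to multi-index convolutions: if $u \in (\ell^1)^n$ and $\alpha \in \nn^n$, then the $\alpha$-fold convolution $u^{*\alpha}$ (built from $\alpha_m$ copies of each $u^{(m)}$) satisfies $\norm{u^{*\alpha}}_1 \leq \prod_{m=1}^n \norm{u^{(m)}}_1^{\alpha_m} \leq \norm{u}_\infty^{|\alpha|}$.

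For $Y_\tau$ I would unwind the composition $T_\tau(0) = \tau \pi_N \circ I \circ \phi_f(\hat{a}(\tau))$ componentwise. The factor $\frac{1}{j}$ from the integration map together with $j \geq N+1$ gives an overall prefactor of $\frac{\tau}{N+1}$, reducing the problem to bounding $\norm{\phi_{f^{(i)}}(\hat{a}(\tau))}_1$. Expanding $\phi_{f^{(i)}}$ via the multi-index Taylor series of $f^{(i)}$ centered at $x_0$ and applying the Banach algebra estimate produces a bound of the form $\sum_{\alpha \in \nn^n} |c_\alpha^{(i)}| \norm{\hat{a}(\tau)}_\infty^{|\alpha|}$. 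Since $\tau \leq \tau^* \leq \tau_0$ forces $\norm{\hat{a}(\tau)}_\infty \leq b_*$, the bound \eqref{eq:field_C_bound} applies and produces $C$ as the final estimate, so taking the maximum over $i$ yields $\norm{T_\tau(0)}_\infty \leq \frac{\tau C}{N+1}$.

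For $Z_\tau$ I would substitute an arbitrary $h \in X$ into the explicit formula for $DT_\tau(u)$ from Lemma \ref{lem:field_T_FD} and apply the triangle inequality componentwise to obtain
\[
\norm{(DT_\tau(u) h)^{(i)}}_1 \leq \frac{\tau \norm{h}_\infty}{N+1} \sum_{m=1}^n \norm{\phi_{\partial f^{(i)}/\partial x_m}(\hat{a}(\tau)+u)}_1,
\]
using the same integration estimate together with the Banach algebra property to absorb $\norm{h^{(m)}}_1 \leq \norm{h}_\infty$. Each partial derivative $\partial f^{(i)}/\partial x_m$ has multi-index Taylor coefficients $\alpha_m c_\alpha^{(i)}$, so swapping the finite $m$-sum with the $\alpha$-sum produces the left-hand side of \eqref{eq:field_C*_bound}. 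The geometric input that makes the estimate uniform is that for $u \in \overline{B_r(0)}$ with $r \in (0,r^*)$, the definition of $r^*$ in \eqref{eq:field_max_error} yields $\norm{\hat{a}(\tau) + u}_\infty \leq \norm{\hat{a}(\tau_0)}_\infty + r^* = b_*$, so the $C^*$ bound applies uniformly in $u$. Dividing by $\norm{h}_\infty$ and taking both the maximum over $i$ and the supremum over $u \in \overline{B_r(0)}$ yields the claim.

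The expected obstacle is purely notational: keeping the multi-index expansions of $\phi_{f^{(i)}}$ and $\phi_{\partial f^{(i)}/\partial x_m}$ straight and applying the Banach algebra estimate with the exponent $|\alpha|$ rather than attempting to track the $n$ components individually. Conceptually no new analytic ideas beyond the scalar case are required, so the proof should be structurally identical to Lemma \ref{lem:local_Y0_Z0_bounds}, reducing to careful bookkeeping of multi-indices and invocation of \eqref{eq:field_C_bound}--\eqref{eq:field_C*_bound} in place of the scalar bounds.
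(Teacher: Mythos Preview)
Your proposal is correct and follows exactly the route the paper indicates: it explicitly says the proof is similar to Lemma~\ref{lem:local_Y0_Z0_bounds} with Equations~\eqref{eq:field_C_bound} and~\eqref{eq:field_C*_bound} supplying the necessary bounds, and your componentwise argument with the multi-index Banach algebra estimate $\norm{u^{*\alpha}}_1 \le \norm{u}_\infty^{|\alpha|}$ is precisely how that adaptation is carried out. The only detail worth flagging is that your geometric input for the $Z_\tau$ bound should read $\norm{\hat a(\tau)}_\infty + r < \norm{\hat a(\tau_0)}_\infty + r^* = b_*$ (using monotonicity of $\norm{\hat a(\cdot)}_\infty$ in $\tau \le \tau_0$), which you state correctly but could make the monotonicity step explicit.
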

The proof is similar to the proof of Lemma \ref{lem:local_Y0_Z0_bounds} with Equations \eqref{eq:field_C_bound}, \eqref{eq:field_C*_bound} providing the necessary bounds in this case.

\subsection{The constructive proof of the \ck theorem}
At last, we have all ingredients necessary to give a constructive proof of the \ck theorem. 
\begin{theorem}[\ck Theorem]
	Suppose $V \subset \rr^n$ is an open subset, $f : V \to \rr^n$ is analytic, and $x_0 \in V$. Then, the initial value problem
	\begin{equation}
	\label{eq:ivp_in_ck_theorem}
	\dot x = f(x), \qquad x(0) = x_0 
	\end{equation}
	has a unique analytic solution. 
\end{theorem}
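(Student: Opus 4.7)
The plan is to mirror the structure of the scalar precursor Theorem \ref{thm:ck_for_analytic_scalars}, now assembling the vector-field ingredients already built in Section \ref{sec:ck_vectorfield}, and then invoke Lemma \ref{prop:characterization_of_T} to convert the fixed point into an analytic solution. First, since $f$ is analytic on $V$ and $x_0 \in V$, I would pick $b_0 > 0$ so that each $f^{(i)}$ has a power series converging on $(x_0^{(1)}-b_0, x_0^{(1)}+b_0) \times \dotsm \times (x_0^{(n)}-b_0, x_0^{(n)}+b_0)$, fix any $b_* \in (0,b_0)$, and obtain constants $C, C^*, C^{**}$ satisfying \eqref{eq:field_C_bound}, \eqref{eq:field_C*_bound}, \eqref{eq:field_C**_bound}. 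Then I would fix $N \in \nn$ (any $N \geq 1$ works), form the tail subspace $X$, the constants $r^*, \tau^*$ via \eqref{eq:field_max_error}, \eqref{eq:field_max_tau}, and the open set $U$ via \eqref{eq:field_openset}.

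Next, I would apply Lemma \ref{lem:field_T_FD} to conclude that $T_{\tau^*}: U \to X$ is well-defined and \Frech differentiable, and Lemma \ref{lem:field_Y0_Z0_bounds} to obtain the bounds $Y_{\tau^*} = \frac{\tau^* C}{N+1}$ and the (constant) function $Z_{\tau^*}(r) = \frac{\tau^* C^*}{N+1}$ verifying \eqref{eq:rpoly_Y}, \eqref{eq:rpoly_Z} with respect to the base point $\bar{x} = 0$. Assembling the radii polynomial $p(r) = Z_{\tau^*}(r) r - r + Y_{\tau^*}$, I would use $\tau^* \leq \frac{N r^*}{C + r^* C^*}$ to estimate
\begin{equation*}
p(r) \leq \frac{N r^*}{(N+1)(C + r^* C^*)} (r C^* + C) - r
\end{equation*}
for $r \in (0, r^*)$, and then set $r_0 := \tfrac{N}{N+1} r^*$ to verify $p(r_0) < 0$, exactly as in the scalar precursor. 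Theorem \ref{thm:local_radii_polynomial} then yields a fixed point $u^* \in \overline{B_{r_0}(0)} \subset U$ of $T_{\tau^*}$.

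Now Lemma \ref{prop:characterization_of_T} converts this fixed point into an analytic solution $y: (-1,1) \to \rr^n$ of the rescaled system $\dot y = \tau^* f(y)$, $y(0) = x_0$. Finally, I would invoke Proposition \ref{prop:tau_nu_equivalence} (in its obvious vector-field analog) to pull back via the time rescaling $t = \tau^* s$, yielding an analytic solution $x(t) := y(t/\tau^*)$ of \eqref{eq:ivp_in_ck_theorem} on the open interval $J(x_0) := (-\tau^*, \tau^*)$ containing $0$. Uniqueness among analytic solutions follows either from uniqueness in Lemma \ref{prop:characterization_of_T} combined with identity-of-power-series, or more simply from the Picard--Lindel\"of theorem (which the paper already invokes for smoothness), since any analytic solution is in particular $C^1$.

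The main obstacle is essentially bookkeeping rather than a new estimate: I need to be careful that the multi-index power-series bounds \eqref{eq:field_C_bound}--\eqref{eq:field_C**_bound} actually feed through the composition $T_\tau = \tau \pi_N \circ I \circ \phi_f$ in a manner parallel to the scalar Lemma \ref{lem:local_Y0_Z0_bounds}, and that the $\norm{\cdot}_\infty$ norm on $(\ell^1)^n$ correctly bounds the Cauchy-product-type terms appearing in $\phi_f$ and $\phi_{\nabla f^{(i)}}$. Once this is in hand the radii-polynomial calculation is formally identical to the scalar case, so no new analytic input is required beyond Lemmas \ref{lem:field_T_FD}, \ref{lem:field_Y0_Z0_bounds}, and \ref{prop:characterization_of_T}.
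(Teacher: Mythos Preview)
Your proposal is correct and follows essentially the same route as the paper's own proof: fix $N$ and $b_*$, read off $r^*,\tau^*,U$, apply Lemma~\ref{lem:field_Y0_Z0_bounds} at $\tau=\tau^*$ to obtain $Y_{\tau^*},Z_{\tau^*}$, evaluate the radii polynomial at $r_0=\tfrac{N}{N+1}r^*$ to get $p(r_0)<0$, invoke Theorem~\ref{thm:local_radii_polynomial} for the fixed point, and then use Lemma~\ref{prop:characterization_of_T} together with Proposition~\ref{prop:tau_nu_equivalence} to translate back to an analytic solution of the unrescaled problem. Your extra remarks on uniqueness and on the bookkeeping for the multi-index bounds are accurate but not needed beyond what Lemmas~\ref{lem:field_T_FD} and~\ref{lem:field_Y0_Z0_bounds} already provide.
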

\begin{proof}
   Suppose $N \in \nn$, let $S\tail^n$ be the tail subspace of order $N$, and $X = S\tail^n \cap (\ell^1)^n$. Fix $b_* \in (0, b)$ with corresponding constants $r^*, \tau^*$ as defined by Equations \eqref{eq:field_max_error} and \eqref{eq:field_max_tau}, and $U \subset X$ as defined by Equation \eqref{eq:field_openset}. 
   
   We will consider the radii polynomial obtained from the bounds in Lemma \ref{lem:field_Y0_Z0_bounds} for the parameter value $\tau = \tau^*$. In particular, let $\hat{a} := \hat{a}(\tau^*)$ denote the truncation defined in Equation \eqref{eq:field_truncation}, $T_{\tau^*} : U \to X$ denotes the map defined in Equation \eqref{eq:field_tail_map}, and define the radii polynomial
    \[
    \begin{aligned}
    p(r) := Z_{\tau^*}(r)r - r + Y_{\tau^*} \qquad \text{for} \quad  r \in (0, r^*),
    \end{aligned}
    \]	
    where $Y_{\tau^*}$ and $Z_{\tau^*}(r)$ are the norm bounds for $T_{\tau^*}$ and $DT_{\tau^*}$ proved in Lemma \ref{lem:field_Y0_Z0_bounds}. We define $r_0 := \frac{Nr^*}{N+1} \in (0, r^*)$ and by a direct computation similar to the proof of Theorem \ref{thm:ck_for_analytic_scalars}, we have $p(r_0)<0$. It follows from Theorem \ref{thm:local_radii_polynomial} that $T_{\tau^*}$ has a unique fixed point. By Proposition \ref{prop:characterization_of_T}, this fixed point is the tail of an analytic solution to Equation \eqref{eq:vector_ivp}. By Proposition \ref{prop:tau_nu_equivalence}, this sequence is in fact a rescaled coefficient sequence for an analytic solution of Equation \eqref{eq:ivp_in_ck_theorem} which completes the proof. 
\end{proof}

\subsection{An example}
The goal of this work is not to present a practical algorithm for verifying that any particular initial value problem has an analytic solution. Nevertheless, it may be instructive to demonstrate a constructive proof for an example, especially considering that the approach is inspired by rigorous numerical algorithms which do have this exact goal in mind. 

Therefore, we conclude this paper by presenting an example of the constructive proof for a toy problem. We have intentionally chosen a rather simple example in an effort to focus on the proof itself.  Additionally, the bounds chosen to demonstrate the proof in this example are intended to make the computations easy to follow rather than minimizing the approximation error as one would probably do in practice. 

\begin{example}
	Define the function $f : \rr \to \rr$ by the formula $f(x) = x(1-x)$ and consider the scalar initial value problem 
	\begin{equation}
		\label{eq:ivp_example1}
	\dot x = \tau f(x) = \tau x(1-x), \qquad x_0 = \frac{1}{2}.
	\end{equation}
	In this example $f$ is polynomial and therefore analytic. Hence, the \ck theorem implies that Equation \eqref{eq:ivp_example1} has a unique analytic solution (in fact, the exact solution is well known to be $x(t) = (1 + \exp(-\tau t))^{-1}$). We will prove this following the constructive approach described in this paper.
	
	We begin by rewriting $f$ centered at $x_0$ as $f(x) = \frac{1}{4} - (x - \frac{1}{2})^2$. So the coefficients for $f$ are $c_0 = \frac{1}{4}$, $c_2 = -1$, and $c_j = 0$ for all $j \neq 0,2$.  Since $f$ is polynomial we have $b = \infty$ and therefore we can choose $b_*$ arbitrarily. 
	
	For this example, we let $b_* = \frac{1}{2}$ and we take $N = 5$. Applying the formula in Equation \eqref{eq:scalar_recursion} we obtain the first $N$ coefficients which are
\[
a_0(\tau) = \frac{1}{2}, \quad a_1(\tau) = \frac{\tau}{4}, \quad a_2(\tau) = 0, \quad a_3(\tau) = \frac{-\tau^3}{48}, \quad a_4(\tau) = 0.
\]
Therefore, $\hat a(\tau)$ is the sequence
\[
\hat a(\tau) = \paren{0, \frac{\tau}{4}, 0,  \frac{-\tau^3}{48}, 0, 0, \dots},
\]
which is in $\ell^1$ for all finite $\tau$. Next, we define $\tau_0$ as the solution to the equation $\norm{\hat a(2 \tau_0)}_1 = b_*$. For this example, this amounts to solving $\tau_0 + \frac{1}{3} \tau_0^3 - 1 = 0$. As expected, this equation has a unique real solution which has the exact value
\[
\tau_0 = \paren{\frac{3 + \sqrt{13}}{2}}^\frac{1}{3} - \paren{\frac{2}{3 + \sqrt{13}}}^\frac{1}{3} \approx 0.8177.
\]
Following the definition in Equation \eqref{eq:local_max_error} we find, after a bit of algebra, that $r^* = b_* - \norm{\hat a(\tau_0)}_1$ is the unique real root of the cubic polynomial $4096z^3 - 6912z^2 + 5088z - 1029$. The exact value is given by
\[
r^* = \frac{9}{16} + \frac{5}{16} \paren{\frac{\sqrt{13} - 3}{2}}^\frac{1}{3} - \frac{5}{16}\paren{\frac{2}{\sqrt{13} - 3}}^\frac{1}{3} \approx 0.3070.
\]
Next, we define $C = 1, C^* = 2$ and observe that 
\begin{align*}
	C & > \frac{1}{2} = \abs{c_0} + \abs{c_2}b_*^2 \\
	C^* & > 1 = 2 \abs{c_2} b_*,
\end{align*}
implying $C$ and $C^*$ satisfy the bounds required by Equations \eqref{eq:scalar_C_bound} and  \eqref{eq:scalar_C*_bound} respectively. Consequently, for this choice of $b_*, N, C$ and $C^*$, we have that 
\[
\tau_0 < \frac{Nr^*}{C + r^* C^*} \approx 0.9510,
\]
and therefore we set $\tau^* = \tau_0$ as defined in Equation \eqref{eq:local_max_tau}.

Continuing with the construction, we compute $Y_{\tau^*}$ and $Z_{\tau^*}$ according to  the formulas defined in Lemma \ref{lem:local_Y0_Z0_bounds}. For this example we obtain the bounds
\begin{align*}
	Y_{\tau^*} & = \frac{\tau^*}{6} = \frac{1}{6} \paren{\paren{\frac{3 + \sqrt{13}}{2}}^\frac{1}{3} - \paren{\frac{2}{3 + \sqrt{13}}}^\frac{1}{3} }\approx 0.1353. \\
	Z_{\tau^*}(r) & = \frac{\tau^*}{3} = \frac{1}{3} \paren{\paren{\frac{3 + \sqrt{13}}{2}}^\frac{1}{3} - \paren{\frac{2}{3 + \sqrt{13}}}^\frac{1}{3} }
	 \approx 0.2726 \qquad \text{for all} \ r \in (0, r^*).
\end{align*}
As expected, the radii polynomial, $p : (0, r^*) \to \rr$ is given by the formula
\[
p(r) = Z_{\tau^*}(r)r - r + Y_{\tau^*},
\]
which is linear in $r$. 
The conclusion of Theorem \ref{thm:ck_for_analytic_scalars} is that if $p(r)$ is negative for some $r \in (0, r^*)$, then $T_{\tau^*}$ must have a fixed point and consequently, Equation \eqref{eq:ivp_example1} has an analytic solution. As in the proof of Theorem \ref{thm:ck_for_analytic_scalars}, we choose
\[
r_0 = \frac{N}{N+1}r^* = \frac{45}{96} + \frac{25}{96} \paren{\frac{\sqrt{13} - 3}{2}}^\frac{1}{3} - \frac{25}{96}\paren{\frac{2}{\sqrt{13} - 3}}^\frac{1}{3}  \approx 0.2558,
\]
and indeed we find that $p(r_0) \approx -0.0499$ which completes the proof for this example. 
\end{example}



\section*{Acknowledgements}
The authors wish to thank Konstantin Mischaikow for helpful discussions. S.K. was partially supported by NSF grant 1839294, by NIH-1R01GM126555-01 as part of the Joint DMS/NIGMS Initiative to Support Research at the Interface of the Biological and Mathematical Science and DARPA contract HR001117S0003-SD2-FP-011. T.Z. was partially supported by the Training Program for Top Students in Mathematics from Zhejiang University.

\bibliographystyle{unsrt}
\bibliography{library}
\end{document}